\newcommand{\bburl}[1]{\textcolor{blue}{\url{#1}}}
\newcommand{\E}{\mathbb{E}}
\renewcommand{\E}{\mathbb{E}}
\numberwithin{equation}{section}
\theoremstyle{plain}
\newtheorem{thm}{Theorem}[section]
\newtheorem{prop}[thm]{Proposition}
\newtheorem{theorem}[thm]{Theorem}
\newtheorem{lemma}[thm]{Lemma}
\theoremstyle{definition}
\newtheorem{defi}[thm]{Definition}
\newtheorem{remark}[thm]{Remark}
\newtheorem{exa}[thm]{Example}
\newcommand\be{\begin{equation}}
\newcommand\ee{\end{equation}}
\newcommand\bea{\begin{eqnarray}}
\newcommand\eea{\end{eqnarray}}
\newcommand\bi{\begin{itemize}}
	\newcommand\ei{\end{itemize}}
\newcommand\ben{\begin{enumerate}}
	\newcommand\een{\end{enumerate}}
\newcommand\bc{\begin{center}}
	\newcommand\ec{\end{center}}
\newcommand\ba{\begin{array}}
	\newcommand\ea{\end{array}}
\newcommand{\R}{\ensuremath{\mathbb{R}}}
\newcommand{\C}{\ensuremath{\mathbb{C}}}
\newcommand{\Z}{\ensuremath{\mathbb{Z}}}
\newcommand{\N}{\mathbb{N}}
\newcommand{\hr}[1]{\href{#1}{\url{#1}}}
\renewcommand \l {\lambda}
\newcommand{\curleps}{\varepsilon}
\newcommand{\pfrac}[2]{\left(\frac{#1}{#2}\right)}
\newcommand{\tth}{^{\operatorname{th}}}
\newcommand{\Tr}{\text{Tr}}
\DeclareMathOperator{\tr}{Tr}
\newcommand{\etr}{\mathbb{E}\;\tr\;}
\newcommand*{\reff}[1]{\hyperref[#1]{\ref{#1}}}
\let\@@pmod\pmod
\DeclareRobustCommand{\pmod}{\@ifstar\@pmods\@@pmod}
\def\@pmods#1{\mkern4mu({\operator@font mod}\mkern 6mu#1)}
\title{Spectral Distributions of Periodic Random Matrix Ensembles}
\author{Roger Van Peski}
\email{\textcolor{blue}{\href{mailto:rpeski@princeton.edu}{rpeski@princeton.edu}}}
\address{Department of Mathematics, Princeton University, Princeton, NJ 08544}
\thanks{The author would like to thank Steven J. Miller, Arup Bose, Nhi Truong and Peter Cohen for helpful discussions, thank the anonymous referees for helpful comments, and express gratitude to the Umashankar family for their hospitality while the main research was undertaken.}
\subjclass[2010]{15B52 (primary), 15B57 (secondary)}
\keywords{Random Matrix Ensembles, Limiting Spectral Measure}
\date{\today}
\begin{document}

\begin{abstract}
Kolo{\u{g}}lu, Kopp and Miller compute the limiting spectral distribution of a certain class of real random matrix ensembles, known as $k$-block circulant ensembles, and discover that it is exactly equal to the eigenvalue distribution of an $k \times k$ Gaussian unitary ensemble. We give a simpler proof that under very general conditions which subsume the cases studied by Kolo{\u{g}}lu-Kopp-Miller, real-symmetric ensembles with periodic diagonals always have limiting spectral distribution equal to the eigenvalue distribution of a finite Hermitian ensemble with Gaussian entries which is a `complex version' of a $k \times k$ submatrix of the ensemble. 
We also prove an essentially algebraic relation between certain periodic finite Hermitian ensembles with Gaussian entries, and the previous result may be seen as an asymptotic version of this for real-symmetric ensembles. The proofs show that this general correspondence between periodic random matrix ensembles and finite complex Hermitian ensembles is elementary and combinatorial in nature.
\end{abstract}

\maketitle

\section{Introduction}

Random matrix theory mainly studies the distribution of eigenvalues of matrices sampled from some \emph{ensemble} ($N \times N$ matrix-valued random variable) in the limit as the matrix size $N \to \infty$. Wigner  \cite{Wig1, Wig2, Wig3, Wig4, Wig5} initially used random matrices to model energy levels of heavy nuclei, but they have since found uses across many fields, mainly analytic number theory \cite{Mon, KS1, KS2, KeSn} and their original domain of physics. A very important classical role was played by the three \emph{classical random matrix ensembles}, the Gaussian Orthogonal, Unitary and Symplectic Ensembles respectively. Each has the important property that the probability measure is invariant under $A \mapsto CAC^*$ for any $C$ which is, respectively, orthogonal, unitary or symplectic. The probability measures have an explicit description, which we give for the GUE since it will be important later. 

\begin{defi}\label{def:gue}
The $N \times N$ \emph{Gaussian Unitary Ensemble (GUE)} is the random matrix ensemble given by 
\begin{equation}
\begin{pmatrix}
c_{0,0} & c_{0,1} & \cdots & c_{0,N-1} \\
c_{1,0} & c_{1,1} & \cdots & c_{1,N-1} \\
\vdots & \vdots & \ddots & \vdots \\
c_{N-1,0} & c_{N-1,1} & \cdots & c_{N-1,N-1} \\
\end{pmatrix},
\end{equation}
with the $c_{\ell,j}$ defined as follows. For $\ell \neq j$ $c_{\ell,j} \sim \mathcal{N}_\C(0,1)$ % = \frac{a_{\ell,j}+i b_{\ell,j}}{\sqrt{2}}$ where the $a_{\ell,j}$ and $b_{\ell,j}$ are iid $\mathcal{N}_\R(0,1)$,
are iid subject only to the restriction $c_{j,\ell} = \overline{c_{\ell,j}}$ (i.e. the ensemble is Hermitian). Furthermore, $c_{j,j}$ are iid $\mathcal{N}_\R(0,1)$. Here $\mathcal{N}_\R(\mu,\sigma)$ denotes the real Gaussian distribution of mean $\mu$ and variance $\sigma$, and $\mathcal{N}_\C(0,1)$ is the distribution of $\frac{a+ib}{\sqrt{2}}$ for $a,b \sim \mathcal{N}_\R(0,1)$ iid.
\end{defi}

A central concept in random matrix theory is \emph{universality}, one flavor of which is the phenomenon that many different ensembles have semicircular limiting spectral distribution, in particular the GOE, GUE and GSE \cite{Meh}. For instance, the entries need not be Gaussian, but can indeed be any sufficiently nice random variables, and there is an ever-growing body of literature on how weak `sufficiently nice' can be made \cite{Tao, TV, TVK}. Classical ensembles such as the GUE are in some sense very `free,' in that (in the case above) the only dependent entries are those transpose entries needed to ensure that it is Hermitian. Besides studying these classical ensembles, a substantial bulk of the random matrix theory literature is devoted to the eigenvalue distributions of various special ``patterned'' ensembles with often non-semicircular limiting spectral distribution. These may consist of Toeplitz, Hankel or various other types of matrices, for which there are additional restrictions on the entries \cite{Bai, BasBo1, BasBo2, BanBo, BLMST, BCG, BHS1, BHS2, BM, BDJ, GKMN, HM, JMRR, JMP, Kar, KKM, LW, MMS, MNS, MSTW, McK, Me, Sch}. 

What is very interesting is when the theory of such patterned ensembles relates back to the classical ensembles. In \cite{KKM}, Kolo{\u{g}}lu, Kopp and Miller study ensembles of block circulant matrices.

\begin{defi}\label{def:circulant}
For $k|N$, an $N \times N$ \textbf{$k$-block circulant ensemble} is one where the entries $a_{i,j}$ satisfy
\begin{align}
a_{i,j} &= a_{i+k \pmod*{N},j+k \pmod*{N}}\\
a_{i,j} &= a_{j,i},
\end{align} 
where the random variables $a_{i,j}$ are iid (other than the above restrictions) real random variables with mean $0$, variance $1$ and finite higher moments. Equivalently, this is the ensemble of block matrices
\begin{equation}
\begin{pmatrix}
B_0 & B_1 & \cdots & B_{N/k-1} \\
B_{-1} & B_0 & \cdots & B_{N/k-2} \\
B_{-2} & B_{-1} & \cdots & B_{N/k-3} \\
\vdots & \vdots & \ddots & \vdots \\
B_{1-N/k} & B_{2-N/k} & \cdots & B_{0}
\end{pmatrix},
\end{equation}
where each block $B_i$ consists of iid mean-$0$ variance-$1$ random variables subject only to the restrictions that the matrix is symmetric and $B_{N/k-i} = B_{-i}$ for all $i$.
\end{defi}

\begin{remark}\label{rmk:comments_on_circulant}
\begin{itemize}
    \item It will turn out that the actual distribution of the entries has no effect on the limiting spectral distribution provided that it satisfies the conditions in Definition \ref{def:circulant}.
    \item Without the condition that the indices of the RHS in $a_{i,j} = a_{i+k \pmod*{N},j+k \pmod*{N}}$ are taken modulo $N$, this definition describes a Toeplitz ensemble, which have also been studied extensively.
    \item When $k=N$, the $k$-block circulant ensemble is simply an iid symmetric ensemble, and when $k=1$ it is a standard circulant ensemble.
\end{itemize}
\end{remark}

There are several classical results on circulant matrices which are not difficult to prove, and which we recall here for context--all and many more may be found in \cite{davis}. 

\begin{prop}
The algebra of $N \times N$ real (resp. complex) circulant matrices (not necessarily Hermitian) is isomorphic to the group algebra $\R[\Z/N\Z]$ (resp. $\C[\Z/N\Z]$).
\end{prop}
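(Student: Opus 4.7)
The plan is to exhibit an explicit isomorphism by identifying the cyclic shift permutation matrix with a generator of $\Z/N\Z$. Let $S$ denote the $N \times N$ matrix with entries $S_{i,j} = 1$ if $j \equiv i+1 \pmod{N}$ and $0$ otherwise; it is the permutation matrix of the cyclic shift. A direct computation shows $S^N = I$ and $(S^k)_{i,j} = 1$ iff $j \equiv i+k \pmod{N}$. I would work with the real case throughout; the complex case is identical.

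Next I would unpack the definition of a circulant matrix: $C$ is circulant iff there exist scalars $c_0,\ldots,c_{N-1}$ with $C_{i,j} = c_{j-i \bmod N}$. This immediately gives the decomposition
\begin{equation}
C \;=\; \sum_{k=0}^{N-1} c_k\, S^k,
\end{equation}
so the set of circulant matrices coincides with the $\R$-span of $\{I,S,\ldots,S^{N-1}\}$, and these $N$ matrices are linearly independent since each $S^k$ has its nonzero entries in a distinct set of positions. Hence $\{I,S,\ldots,S^{N-1}\}$ is a basis for the space of circulants, which in particular shows circulants form a subalgebra (closure under multiplication follows from $S^i S^j = S^{i+j \bmod N}$).

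Define $\phi : \R[\Z/N\Z] \to \mathrm{Circ}_N(\R)$ on basis elements by $\phi(e_k) = S^k$ and extend linearly. Well-definedness on $\Z/N\Z$ follows from $S^N = I$. It is an algebra homomorphism because $\phi(e_i e_j) = \phi(e_{i+j \bmod N}) = S^{i+j \bmod N} = S^i S^j = \phi(e_i)\phi(e_j)$, and both sides are $\R$-linear. Surjectivity is exactly the decomposition above, and injectivity is the linear independence of $\{S^k\}_{k=0}^{N-1}$; both algebras have $\R$-dimension $N$, so $\phi$ is an isomorphism.

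There is no substantive obstacle here: the entire proof is bookkeeping around the identification $k \leftrightarrow S^k$. The only point that requires care is the indexing convention for circulants (whether rows are shifted left or right by one each time), so I would fix the convention $C_{i,j} = c_{j-i \bmod N}$ at the start and keep it consistent. The same argument works verbatim over $\C$ to give $\mathrm{Circ}_N(\C) \cong \C[\Z/N\Z]$.
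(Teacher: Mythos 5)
Your proof is correct and takes exactly the approach the paper sketches: it identifies circulants as the span of powers of the cyclic shift permutation matrix $S$, notes $S$ generates a copy of $\Z/N\Z$, and reads off the isomorphism with the group algebra. You simply fill in the bookkeeping (linear independence, dimension count, homomorphism check) that the paper leaves to the reader as "easy to verify."
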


This is easy to verify, as a circulant matrix is a linear combination of powers of the permutation matrix corresponding to the shift $(12\cdots N)$, and such a shift generates a copy of the cyclic group of order $N$.

\begin{prop}\label{prop:circ_eigs}
Let $A = (a_{ij})_{0 \leq i,j \leq N-1}$ be an $N \times N$ circulant matrix ($1$-block circulant by the above definition). Then the eigenvectors of $A$ are given by 
\begin{equation}
    \begin{pmatrix}
    1 \\ \zeta \\ \zeta^2 \\ \vdots \\ \zeta^{N-1}
    \end{pmatrix}
\end{equation}
for $\zeta$ an $N\tth$ root of unity (not necessarily primitive), and the corresponding eigenvalues are $\lambda_\zeta = \sum_{i=0}^{N-1} a_{0i}\zeta^i$.
\end{prop}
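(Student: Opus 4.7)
The plan is to prove this by exploiting the algebraic structure noted in the preceding proposition, namely that every circulant matrix is a polynomial in the basic shift permutation. Let $S$ be the $N\times N$ permutation matrix that sends $e_i \mapsto e_{i-1 \bmod N}$. Using $a_{i,j}=a_{0,\,j-i\bmod N}$ (which follows from $a_{i,j}=a_{i+1\bmod N,\,j+1\bmod N}$ by iteration), one verifies that
\begin{equation}
A \;=\; \sum_{\ell=0}^{N-1} a_{0,\ell}\, S^{\ell}.
\end{equation}
Therefore any simultaneous eigenvector of $S$ (equivalently, of every $S^\ell$) is automatically an eigenvector of $A$, with eigenvalue given by the same polynomial in the corresponding eigenvalue of $S$.

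Next I would diagonalize $S$ explicitly. For each $N^{\text{th}}$ root of unity $\zeta$, set $v_\zeta = (1,\zeta,\zeta^2,\ldots,\zeta^{N-1})^T$. A one-line computation shows $S v_\zeta = \zeta\, v_\zeta$, because the cyclic shift moves the $j^{\text{th}}$ coordinate $\zeta^j$ into position $j-1$, giving $\zeta \cdot \zeta^{j-1}$ in position $j-1$ thanks to $\zeta^N = 1$. Hence $S^\ell v_\zeta = \zeta^\ell v_\zeta$, and substituting into the polynomial expression for $A$ yields
\begin{equation}
A v_\zeta \;=\; \Bigl(\sum_{\ell=0}^{N-1} a_{0,\ell}\,\zeta^\ell\Bigr) v_\zeta \;=\; \lambda_\zeta\, v_\zeta,
\end{equation}
exactly as claimed. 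Readers preferring to avoid the shift-matrix formalism can redo the calculation entry by entry: $(Av_\zeta)_i = \sum_j a_{i,j}\zeta^j = \sum_j a_{0,j-i\bmod N}\zeta^j$, and the substitution $\ell = j-i\bmod N$ (using $\zeta^N=1$ to drop the mod) factors out $\zeta^i$ and leaves $\lambda_\zeta$.

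Finally, one should note that the $N$ vectors $v_\zeta$ obtained as $\zeta$ ranges over the $N^{\text{th}}$ roots of unity are linearly independent, since the matrix formed by stacking them is a Vandermonde matrix in the distinct values $\zeta$, with nonzero determinant. Thus we have found a full eigenbasis and the list of eigenvalues is complete. I do not anticipate any serious obstacle here; the main care is simply to keep track of the modular indexing when rewriting $a_{i,j}$ in terms of $a_{0,\cdot}$, and to justify the interchange of the sum over $j$ with the sum defining $\lambda_\zeta$ via the substitution $\ell = j - i \bmod N$.
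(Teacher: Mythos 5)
Your proof is correct and follows exactly the route the paper has in mind: the discussion preceding the proposition already identifies a circulant matrix as a polynomial in the cyclic shift $S$, and the paper then simply asserts that the eigenvector/eigenvalue claim ``is easy to verify by inspection.'' You have supplied the details of that inspection (diagonalizing $S$ via $S v_\zeta = \zeta v_\zeta$, substituting, and noting Vandermonde independence), so there is nothing to add.
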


This too is easy to verify by inspection. We note that if a circulant matrix is Hermitian, the above formula for its eigenvalues gives a real number as expected, since $a_{0,i} = \overline{a_{0,-i}}$. As noted in Remark \ref{rmk:comments_on_circulant}, circulant matrices are a special case of the $k$-block circulant ensemble, as are iid symmetric ensembles. The $k$-block circulant ensemble thus interpolates between the circulant case and the iid symmetric case. In the former Proposition \ref{prop:circ_eigs} gives an explicit formula for the eigenvalues, which converges to a Gaussian as $N \to \infty$ by the central limit theorem. In the latter, classical results ensure that in the large $N$ limit the eigenvalues have semicircular distribution. Hence without any further calculation, one should expect the limiting spectral distribution of the $k$-block circulant ensembles to interpolate between the Gaussian ($k=1$) and the semicircle ($k \to \infty$). 

\cite{KKM} explicitly finds the limiting eigenvalue distribution of the $k$-block circulant ensemble as $N \to \infty$, showing that it in fact converges to that of the \emph{finite} $k \times k$ GUE. This is Gaussian for $k=1$ and converges to a semicircle as $k \to \infty$, confirming the heuristic argument of the previous paragraph. Another example of limiting spectral distributions converging to the eigenvalue distribution of a finite ensemble is provided by \cite{BC+}, which considers the fluctuations about large trivial eigenvalues of certain periodic ensembles spiked with constant entries. In this case, the distribution converges to a modified GOE with diagonal entries all $0$. The combinatorial framework used by the author and others in \cite{BC+} inspired the proof of the main result of this paper. 

Just as with the classical theory, the convergence of the spectral distribution of the $k$-block circulant ensemble to the $k \times k$ GUE begs the question of what happens when one adds restrictions on entries of the original ensemble. The $k$-block circulant ensembles are in some sense the `most free' symmetric ensemble for which entries are periodic down the diagonals with period $k$. Because these diagonals `wrap' around the matrix and have two connected components except in the case of the main diagonal, we use the following terminology in the rest of the paper.

\begin{defi}\label{def:wrapped_diagonal}
Given an $N \times N$ ensemble $M = (a_{i,j})_{0 \leq i,j \leq N-1}$, we refer to the set of entries $\{a_{i,j}: j-i \equiv t \pmod{N}\}$ as the \textbf{wrapped diagonal} corresponding to $t$.
\end{defi}

In the $4$-block circulant case, for instance, the random variables on the wrapped diagonal corresponding to $t$ are of the form $a_{0,t},a_{1,t+1},a_{2,t+2},a_{3,t+3},a_{0,t},a_{1,t+1},\ldots$. In this case, we say that the ensemble is specified by the \emph{diagonal pattern} $(a,b,c,d)$, i.e. each diagonal consists of four independent random variables $a,b,c,d,a,b,c,d,\ldots$ repeating with period $4$. One could consider alternate patterns, e.g. $(a,b,a,b)$, which gives a $2$-block circulant ensemble, or some stranger asymmetric pattern such as $(a,b,a,c)$. \cite[Appendix A]{KKM} asks the natural question of whether the limiting spectral distribution of such a patterned periodic ensemble depends solely on the frequency of different random variables on the wrapped diagonal, or whether the order in which they appear matters as well, for example whether the ensembles given by $(a,b,a,b)$ and $(a,b,b,a)$ have the same limiting spectral distribution. They show by a moment bounding argument that this is not the case, and factors other than the frequency with which random variables appear on the wrapped diagonal provably affect the result. However, they do not find a closed form for the moments of the limiting spectral distribution. %\cite[Section 5]{KKM} also suggests that there may be a much easier representation-theoretic proof of their results, though they do not give a complete argument.

In this paper, we show in Theorem \ref{thm:main} that the convergence of the spectral distribution of a periodic random matrix ensemble to that of a finite random matrix ensemble, observed by \cite{KKM} in the circulant/GUE correspondence, is in fact a general combinatorial feature of any periodic random matrix ensemble. The proof is by the method of moments and an argument which recovers the moments of the real and complex Gaussians in the $k \times k$ ensemble from the combinatorics of choosing entries in the original ensemble. It yields as corollaries a shorter and more elementary proof of the main result of \cite{KKM}, and an explicit form for the limiting spectral distributions of the patterned circulant ensembles for which \cite[Appendix A]{KKM} obtains only bounds and numerics. However, it should be noted that the ensembles treated by Theorem \ref{thm:main} are more general in that different diagonals are not required to have the same pattern of entries as in \cite[Appendix A]{KKM}.

\section{Preliminaries and main theorem}

Throughout the paper, we will zero-index matrix entries, e.g. the top-left entry is denoted $a_{0,0}$. $N$ will always denote the size of an ensemble, and $k$ and $M_N$ will be as used below. All ensembles treated in this paper, unless otherwise specified, will be Hermitian, so the eigenvalues will be assumed to be real without further comment. The law of such a random matrix ensemble $M_N$ is a probability measure on $M_{N \times N}(\R)$ given by the product measure coming from the distributions of the individual random variable entries, and any function such as $\Tr \; M_N$ is then an $\R$-valued random variable on this space.

The results may be stated most succinctly using a modification of the language of link functions of \cite{BS}, which is just a convenient way to specify entries in a random matrix ensemble to be either equal or independent.

\begin{defi}\label{def:link_function}
A \textbf{$k$-link function}, or link function when $k$ is clear, is any function $f:(\Z/k\Z)^2 \to S$ for some index set $S$, which for us will always be $\Z/k\Z$. We often treat $f$ as a function on $\Z^2$, where we implicitly precompose with the quotient map $\Z^2 \to  (\Z/k\Z)^2$.

Given $f$ a $k$-link function, an $N \times N$ real random matrix ensemble is \textbf{$f$-linked} if its entries are real random variables with mean $0$, variance $1$ and finite higher moments, which are iid apart from the restriction that $a_{i,j} = a_{i',j'}$ if one of the following is true:
\begin{enumerate}
    \item $i=j',j=i'$ (symmetry).
    \item $i-j \equiv i'-j' \pmod{N}$ (both entries lie on the same wrapped diagonal) and $$f(i , j ) = f(i', j').$$ 
\end{enumerate}
If $X$ is the distribution of one of the entries of an $f$-linked ensemble, we denote the ensemble by $M_{f,N,X}$. 
\end{defi}

It is very important to note that the symmetry restriction $a_{i,j}=a_{j,i}$ is \emph{in addition to} the restriction imposed by the link function $f$, and in general the link function itself will not impose this. It is always possible to write one which will, but this generally results in a more complicated function for which the restrictions it imposes are less easy to see by inspection, which is why the definition is given as above.

\begin{remark}
It should be noted that real ensembles defined by link functions never have dependent entries which are not equal. It does not appear that the method of proof used lends itself to such ensembles, but it would be interesting to see whether a version of Theorem \ref{thm:main} holds for them.
\end{remark}

\begin{exa}
The link function of an $N \times N$ $k$-block circulant ensemble may be given as 
$$f(i ,j ) = i \pmod{k}.$$
This ensures that along each wrapped diagonal, entries repeat with period $k$.
\end{exa}

%The condition on factoring through $\Z/t\Z \times \Z/t\Z$ is required because, without it, every $k$-periodic ensemble would also be a $tk$-periodic ensemble for all $t|\frac{N}{k}$. In Theorem \ref{thm:main} the eigenvalue distribution of a family of $k$-periodic ensembles is related to that of a $k \times k$ ensemble, and this would be nonsensical if $k$ could be replaced by any $tk$. 

\begin{exa}\label{ex:circulant}
The following are examples of $2$-periodic ensembles, the first being simply a circulant ensemble. Here the $c_i$ and $d_i$ are iid random variables with mean $0$, variance $1$, and finite higher moments.
\begin{equation*}
\left(\begin{array}{cc|cc|cc}
c_0 & c_1 & c_2 & c_3 & c_2 & d_1\\
c_1 & d_0 & d_1 & d_2 & c_3 & d_2\\ \cline{1-6}
c_2 & d_1 & c_0 & c_1 & c_2 & c_3\\
c_3 & d_2 & c_1 & d_0 & d_1 & d_2\\ \cline{1-6}
c_2 & c_3 & c_2 & d_1 & c_0 & c_1\\
d_1 & d_2 & c_3 & d_2 & c_1 & d_0
\end{array}\right),\ \ \ \ \ \ \
\left(\begin{array}{cc|cc|cc}
c_0 & c_1 & c_2 & c_3 & c_2 & d_1\\
c_1 & c_0 & d_1 & c_2 & c_3 & c_2\\ \cline{1-6}
c_2 & d_1 & c_0 & c_1 & c_2 & c_3\\
c_3 & c_2 & c_1 & c_0 & d_1 & c_2\\ \cline{1-6}
c_2 & c_3 & c_2 & d_1 & c_0 & c_1\\
d_1 & c_2 & c_3 & c_2 & c_1 & c_0
\end{array}\right),\ \ \ \ \ \ \
\left(\begin{array}{cc|cc|cc}
c_0 & c_1 & c_2 & c_3 & c_2 & c_1\\
c_1 & d_0 & c_1 & d_2 & c_3 & d_2\\ \cline{1-6}
c_2 & c_1 & c_0 & c_1 & c_2 & c_3\\
c_3 & d_2 & c_1 & d_0 & c_1 & d_2\\ \cline{1-6}
c_2 & c_3 & c_2 & c_1 & c_0 & c_1\\
c_1 & d_2 & c_3 & d_2 & c_1 & d_0
\end{array}\right).
\end{equation*}
Their link functions are
\begin{align}
    f^{(1)}(i,j) &= i \pmod{2}, \\
    f^{(2)}(i,j) &= (i-j) \cdot i \pmod{2}, \\
    f^{(3)}(i,j) &= (i-j+1) \cdot i \pmod{2},
\end{align}
respectively.
\end{exa}

For the $k$-block circulant ensemble, we gave an alternate definition in Definition \ref{def:circulant} in terms of the $k \times k$ blocks $B_i$. In the $k$-block circulant case, two blocks $B_i,B_j$ were independent unless they lie on the same wrapped diagonal or its transpose. However, for general $k$-periodic ensembles this is not true; for example, in the third ensemble of Example \ref{ex:circulant}, we have blocks
\begin{equation}
    B_0 = \begin{pmatrix} c_0 & c_1 \\ c_1 & d_0 \end{pmatrix} \text{ and }B_1 = \begin{pmatrix} c_2 & c_3 \\ c_1 & d_2 \end{pmatrix},
\end{equation}
which share the common entry $c_1$ and hence are not independent, but the blocks $B_0$ and $B_1$ lie on wrapped diagonals which are not transposes of one another. It is better to think not of the blocks $B_i$ and the diagonals they lie on, but simply the wrapped diagonals of the whole matrix given by $\{a_{i,j}: i-j \equiv r \pmod{N}\}$ for each $r$. Because of the $i-j \pmod{N}$ condition from Definition \ref{def:link_function}, two entries on distinct wrapped diagonals which are not transposes of one another will always be independent. This is a crucial fact for the proofs of this section and will be used repeatedly.

Because the aim of this paper is to treat the $N \to \infty$ limit of ensembles with the same structure given by a link function, we state what this means concretely below.

\begin{defi}\label{def:k_periodic_fam}
Fix a real-valued random variable $X$ with mean $0$, variance $1$, and finite higher moments, and a $k$-link function $f$. A \textbf{$k$-periodic random matrix ensemble family}, or simply $k$-periodic family, is a sequence of random matrix ensembles $(M_{f,N,X})_{N \in k\N}$ where $M_{f,N,X}$ is as in Definition \ref{def:link_function}.
\end{defi}

We now define the `complex version' of a $k$-periodic family alluded to in the Introduction. 

\begin{defi}\label{def:complexification}
Given positive integers $k|N$ and a $k$-link function $f$, the \textbf{$N \times N$ complex companion ensemble} $\tilde{M}_{f,N}$ is the $N \times N$ Hermitian ensemble with entries $c_{\ell,j}$ distributed as
\begin{equation}
    c_{\ell,j} \sim \begin{cases}
    \mathcal{N}_\R(0,1) & \text{ if }2(j-\ell) \equiv 0 \pmod{N} \text{ and }f(j,\ell)=f(\ell,j) \\
    \mathcal{N}_\C(0,1) & \text{ otherwise}
    \end{cases}.
\end{equation}
which are independent apart from the following restrictions:
\begin{enumerate}
    \item $c_{\ell,j} = \overline{c_{j,\ell}}$  (i.e. the ensemble is Hermitian). % if $2(j-\ell) \not \equiv 0 \pmod{N}$.
    \item $c_{\ell,j} = c_{m,n}$ if $\ell-j \equiv m-n \pmod{N}$ and $f(\ell,j) = f(m,n)$.
\end{enumerate}
\end{defi}

\begin{remark}\label{rmk:split_diagonal}
The condition $2(j-\ell) \equiv 0 \pmod{N}$ is worth explaining. If $N$ is odd, it is equivalent to $j\equiv \ell \pmod{N}$, and the entries $c_{\ell,j}$ satisfying this are the ones on the main diagonal; the extra condition $f(j,\ell)=f(\ell,j)$ is trivially satisfied in this case. It is obviously necessary that the diagonal entries be real rather than complex for the matrix to be Hermitian.

If $N$ is even, there are additional possibilities. Visually, if ones breaks the matrix into $4$ equal-sized blocks, the matrix entries $c_{\ell,j}$ with $2(j-\ell) \equiv 0 \pmod{N}$ and $j-\ell \not \equiv 0 \pmod{N}$ are the ones on the diagonals of the upper-right and lower-left blocks, depending whether $j-\ell$ is $N/2$ or $-N/2$. Together, these two diagonals make up a single wrapped diagonal of our ensemble, which we refer to as the \textbf{split diagonal}. It and the main diagonal are the only two wrapped diagonals which are their own transpose. 

In general there are two ways that entries of a $k$-periodic ensemble or its complex companion ensemble may be dependent: either they are transposes of one another and hence complex conjugates, or else the link function constrains them to be equal. The entries on the main diagonal and the split diagonal are the only ones for which both options are available and we can have $c_{\ell,j} = \overline{c_{j,\ell}}$ (from the Hermitian property) $c_{\ell,j} = c_{j,\ell }$ (from the link function). For the main diagonal entries with $j=\ell$, this always occurs, and they must thus be real. For the split diagonal entries, this may occur for certain link functions and result in real entries, as shown in the example below. We do not have real entries on the split diagonal for every choice of link function; for instance, for the one $f(i,j) = i \pmod{k}$ corresponding to the $k$-block circulant ensemble, the complex companion ensemble is the GUE, and its only real entries are on the main diagonal.
\end{remark}

\begin{exa}
Let $f^{(i)}, i = 1,2,3$ be as in Example \ref{ex:circulant}. Then the complex companion ensemble of the $2$-circulant link function $f^{(1)}$ is
\begin{equation}
\tilde{M}_{f^{(1)},2} = 
    \begin{pmatrix}
    a & b \\
    \bar{b} & c
    \end{pmatrix}
\end{equation}
where $a,c \sim \mathcal{N}_\R(0,1)$ iid and $b \sim \mathcal{N}_\C(0,1)$; this is just the $2 \times 2$ GUE. The complex companion ensemble of $f^{(2)}$ is 
\begin{equation}
\tilde{M}_{f^{(2)},2} = 
    \begin{pmatrix}
    a & b \\
    \bar{b} & a
    \end{pmatrix}
\end{equation}
with $a \sim \mathcal{N}_\R(0,1)$ and $b \sim \mathcal{N}_\C(0,1)$ are independent. The complex companion ensemble of $f^{(3)}$ is 
\begin{equation}
\tilde{M}_{f^{(3)},2} = 
    \begin{pmatrix}
    a & b \\
    b & c
    \end{pmatrix}
\end{equation}
with $a,b,c \sim \mathcal{N}_\R(0,1)$ iid. Note that $b$ is now real, as per Remark \ref{rmk:split_diagonal}.
\end{exa}

All that remains is a few standard definitions.

\begin{defi}\label{def:esd}
The \textbf{empirical spectral measure} of a (fixed, nonrandom) $N\times N$ matrix $A$ is the probability measure
\begin{equation}\label{eq:esd}
\nu_A = \frac{1}{N}\sum_{i = 1}^{N} \delta\left(x - \frac{\lambda_{i}}{\sqrt{N}}\right),
\end{equation}
where the $\{\lambda_{i}\}_{i = 1}^{N}$ are the eigenvalues of $A$ and $\delta$ is the Dirac measure. Its $m \tth$ moment is denoted by 
\begin{equation}
\nu_A^{(m)} := \int_{-\infty}^\infty x^m d\nu_A.
\end{equation}
\end{defi}

Let $M_N$ be an arbitrary $N \times N$ Hermitian random matrix ensemble. We will view the moments $\nu_{M_N}^{(m)}$ of the empirical spectral measure of a randomly chosen matrix of the ensemble $M_N$ as real-valued random variables. The empirical spectral measure also defines a discrete random variable which takes values $\frac{1}{\sqrt{N}}\lambda_1,\ldots,\frac{1}{\sqrt{N}}\lambda_N$ with equal probability; therefore, given an ensemble $M_N$, we have a $\R$-valued random variable on the probability space $M_{N \times N}(\R) \times \{1,\ldots,N\}$ corresponding to choosing a matrix from $M_{N \times N}(\R)$ and then randomly choosing one of its eigenvalues and normalizing by $\frac{1}{\sqrt{N}}$. This random variable defines a measure on $\R$ which we refer to as the \textbf{eigenvalue distribution} of the ensemble $M_N$ and denote it by $\mu_{M_N}$. 

It is worth noting that there is a loss of information implicit in the definition of the eigenvalue distribution, e.g. spacings and correlations of eigenvalues; it would be interesting to see whether Theorem \ref{thm:main} can be strengthened to say something about such statistics as well, but such analysis is beyond the scope of the method of proof used. 

Theorem \ref{thm:main_complex} shows that the eigenvalue distribution of an $N \times N$ $k$-periodic complex Hermitian ensemble is the same as that of the $k \times k$ complex Hermitian ensemble with the same link function. It is in some sense a purely algebraic statement relating two complex ensembles, with no asymptotics necessary. Theorem \ref{thm:main} shows that the limiting spectral distribution of a $k$-periodic family of real ensembles is determined by the eigenvalue distribution of the $k \times k$ complex Hermitian ensemble with the same link function, but in contrast to Theorem \ref{thm:main_complex} this is in an asymptotic sense. It is interesting that both reduce to the same $k \times k$ ensemble, but in the real case the $N \to \infty$ limit is required to smooth out the real ensemble to the complex one. As might be expected, the proof of Theorem \ref{thm:main} is more involved, but the proof of Theorem \ref{thm:main_complex} provides an instructive summary of some of the techniques used.

\begin{theorem}\label{thm:main_complex}
Let $k|N$ be positive integers and $f$ a $k$-link function. Then the complex companion ensembles $\tilde{M}_{f,k}$ and $\tilde{M}_{f,N}$ have the same eigenvalue distribution. 
\end{theorem}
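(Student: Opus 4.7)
The plan is to exploit the observation that $\tilde{M}_{f,N}$ has the structure of a block circulant matrix with $k \times k$ blocks, then diagonalize it via the discrete Fourier transform and identify each Fourier mode, up to a unitary conjugation, with a rescaled copy of $\tilde{M}_{f,k}$.

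First I would set $m := N/k$ and re-index rows and columns of $\tilde{M}_{f,N}$ by pairs $(a,\alpha)$ with $0 \le a < m$ and $0 \le \alpha < k$, identifying $(a,\alpha)$ with $ak+\alpha$. Since the entry of $\tilde{M}_{f,N}$ at position $((a,\alpha),(b,\beta))$ depends only on its wrapped diagonal $(a-b)k + \alpha - \beta \pmod{N}$ and on the value $f(\alpha,\beta)$, and since $N = mk$ means $(a-b)k \bmod N$ is determined by $(a-b) \bmod m$, the entry is a function of $(a-b) \bmod m$ and $(\alpha,\beta)$ alone. Writing $B_s$ for the $k \times k$ block appearing at all positions $(a,b)$ with $a - b \equiv s \pmod{m}$, the Hermitian property of $\tilde{M}_{f,N}$ becomes $B_{m-s} = B_s^*$, and the matrix is block circulant. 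The standard discrete Fourier diagonalization then implies that the spectrum of $\tilde{M}_{f,N}$ (with multiplicity) is the union, as $j$ ranges over $\{0,\ldots,m-1\}$, of the spectra of the $k\times k$ Hermitian matrices
\[
\hat{B}(\omega_j) := \sum_{s=0}^{m-1} B_s \, \omega_j^s, \qquad \omega_j := e^{2\pi i j/m}.
\]

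The core step is to show that, for each $j$, $\hat{B}(\omega_j)$ is unitarily similar to a matrix distributed as $\sqrt{m}\,\tilde{M}_{f,k}$. Two positions $(\alpha,\beta)$ and $(\alpha',\beta')$ are constrained to agree in $\tilde{M}_{f,k}$ exactly when $\alpha - \beta \equiv \alpha' - \beta' \pmod{k}$ and $f(\alpha,\beta) = f(\alpha',\beta')$. A direct unpacking of the definition of $\hat{B}(\omega_j)$ shows that under these hypotheses the corresponding entries of $\hat{B}(\omega_j)$ are either literally equal (when $\alpha - \beta = \alpha'-\beta'$) or differ by a phase factor $\omega_j^{\pm 1}$ (in the wrap-around case when $\alpha - \beta$ and $\alpha'-\beta'$ differ by $\pm k$). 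These stray phases are absorbed by conjugating with the diagonal unitary $D_j := \mathrm{diag}(1, \zeta_j, \zeta_j^2, \ldots, \zeta_j^{k-1})$, where $\zeta_j := e^{2\pi i j/N}$ is chosen so that $\zeta_j^k = \omega_j$. After this conjugation, $D_j^* \hat{B}(\omega_j) D_j$ satisfies exactly the equalities demanded by $\tilde{M}_{f,k}$; its entries are sums of $m$ independent Gaussians of variance $1$ and hence Gaussian of variance $m$; and the dichotomy between real and complex entries on the main and split diagonals matches Definition~\ref{def:complexification}, because the Hermitian pairings $(B_s,B_{m-s})$ contribute conjugate-summed terms and the self-paired terms $B_0$ and (when $m$ is even) $B_{m/2}$ already have the required real/complex structure.

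To finish, each $D_j^*\hat{B}(\omega_j)D_j$ has the same eigenvalue distribution as $\sqrt{m}\,\tilde{M}_{f,k}$, so the $N=mk$ eigenvalues of $\tilde{M}_{f,N}$ are jointly distributed as $m$ (possibly correlated) copies of the $k$ eigenvalues of $\sqrt{m}\,\tilde{M}_{f,k}$. Passing to the expected empirical spectral measure uses only marginals, and the normalization $1/\sqrt{N} = 1/(\sqrt{m}\sqrt{k})$ exactly cancels the $\sqrt{m}$ scaling, so the eigenvalue distributions of $\tilde{M}_{f,N}$ and $\tilde{M}_{f,k}$ coincide. I expect the bookkeeping in the third step to be the main obstacle: one must simultaneously verify (i) that after conjugation by $D_j$ the pattern of forced equalities in each Fourier block matches that in $\tilde{M}_{f,k}$; (ii) that entries not forced equal or Hermitian-paired remain independent, which follows because independent entries of $\tilde{M}_{f,N}$ live on disjoint wrapped diagonals and hence contribute disjoint underlying random variables to the Fourier sums; and (iii) that the real-versus-complex Gaussian distinction on the main and split diagonals survives the Fourier averaging, requiring a short case analysis depending on the parity of $m$ and on whether the link function self-pairs split-diagonal entries.
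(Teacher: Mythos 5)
Your proposal is correct, but it takes a genuinely different route from the paper. The paper proves Theorem~\ref{thm:main_complex} by the method of moments: it applies the eigenvalue--trace formula, organizes cyclic products by ``patterns'' and ``matchings,'' and shows that the normalized expected traces of powers agree between $\tilde{M}_{f,N}$ and $\tilde{M}_{f,k}$ by a combinatorial counting argument (the same machinery later used for Theorem~\ref{thm:main}). You instead exploit the block-circulant structure directly: after identifying $\tilde{M}_{f,N}$ as an $m\times m$ block circulant with Hermitian block symmetry $B_{m-s}=B_s^*$, you block-diagonalize by the discrete Fourier transform and show each Fourier block $\hat{B}(\omega_j)$, conjugated by the diagonal unitary $D_j$, is distributed as $\sqrt{m}\,\tilde{M}_{f,k}$. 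This is a sharper, structural statement than the paper's, since it gives an explicit spectral decomposition (in distribution) of $\tilde{M}_{f,N}$ into $m$ correlated copies of $\sqrt{m}\,\tilde{M}_{f,k}$, rather than just equality of moments; and the reduction to marginals at the end is the correct way to pass from there to the eigenvalue distribution, which is by definition a mixture over a uniformly chosen eigenvalue. The one place I would push on your argument is the phrase ``its entries are sums of $m$ independent Gaussians of variance $1$'': the $m$ summands $(B_s)_{\alpha\beta}\omega_j^s$ are not all independent, since $(B_s)_{\alpha\beta}$ and $(B_{m-s})_{\beta\alpha}$ are conjugates by Hermiticity, and on the main or split diagonal the underlying variables $Z_{d,v}$ pair off across $s\leftrightarrow m-s$ (or $s\leftrightarrow 1-s$, depending on block-index convention) into conjugate pairs with possibly a self-paired real term. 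The variance still comes out to $m$ and the real/complex dichotomy does match Definition~\ref{def:complexification}, but the bookkeeping is exactly where sign and indexing conventions matter: whether $B_s$ sits on the super- or sub-diagonal block determines whether the stray phase is $\omega_j$ or $\omega_j^{-1}$, and one must choose $D_j$ consistently so the phases cancel rather than compound. Your outline handles these points (your items (i)--(iii)) and I have checked that they do close; the paper's moment-method proof avoids having to track these conventions at the cost of being less structurally informative.
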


\begin{theorem} \label{thm:main}
Let $f$ be a $k$-link function and $(M_{f,N,X})_{N \in k\N}$ be a $k$-periodic family, and $(A_N)_{N \in k\N}$ a sequence with $A_N$ chosen from the ensemble $M_{f,N,X}$. Then with probability $1$ in the product measure, the sequence of empirical spectral measures $(\nu_{A_N})_{N \in k\N}$ converge weakly to the eigenvalue distribution $\mu_{\tilde{M}_{f,k}}$ of the complex companion ensemble $\tilde{M}_{f,k}$.
\end{theorem}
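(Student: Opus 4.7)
The plan is to apply the method of moments. Three ingredients are needed: (a) that $\E[\nu_{A_N}^{(m)}] \to \int x^m\,d\mu_{\tilde M_{f,k}}$ for every $m \ge 0$, (b) that $\V[\nu_{A_N}^{(m)}] = O(N^{-2})$ so that Borel--Cantelli yields almost sure moment convergence, and (c) that the limiting moments determine $\mu_{\tilde M_{f,k}}$ uniquely; the last is automatic since $\tilde M_{f,k}$ is a finite ensemble and hence has compactly supported eigenvalue distribution. The substance is in (a), and I would approach it as follows.

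Starting from
$$\E[\nu_{A_N}^{(m)}] \;=\; \frac{1}{N^{1+m/2}} \sum_{(i_1,\ldots,i_m) \in \{0,\ldots,N-1\}^m} \E\!\left[\prod_{s=1}^m a_{i_s i_{s+1}}\right],$$
I would partition $\{1,\ldots,m\}$ by the equivalence relation $s \sim t$ iff $a_{i_s i_{s+1}} = a_{i_t i_{t+1}}$. Since entries are mean zero and variance one, the expectation vanishes unless every class has size $\ge 2$, and a standard degree-of-freedom estimate shows that classes of size $\ge 3$ contribute $o(N^{1+m/2})$. Hence the leading contribution comes from pair matchings $\pi$ of $\{1,\ldots,m\}$. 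For each pair $\{s,t\}$ in $\pi$, the equality $a_{i_s i_{s+1}} = a_{i_t i_{t+1}}$ arises in one of two modes: \emph{parallel} ($i_s - i_{s+1} \equiv i_t - i_{t+1} \pmod N$ with $f(i_s,i_{s+1}) = f(i_t,i_{t+1})$) or \emph{antiparallel} ($i_s - i_{s+1} \equiv i_{t+1} - i_t \pmod N$ with $f(i_s,i_{s+1}) = f(i_{t+1},i_t)$, coming from combining symmetry with the link function). These modes coincide exactly on entries of the main diagonal and, when $N$ is even, the split diagonal from Remark~\ref{rmk:split_diagonal}.

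Viewing the consecutive differences $d_s := i_s - i_{s+1} \pmod N$, the cyclicity $\sum_s d_s \equiv 0$ is automatic, while an all-antiparallel assignment imposes $m/2$ constraints $d_s + d_t \equiv 0$, leaving $m/2 + 1$ free degrees of freedom and $\sim N^{1+m/2}$ compatible lifts; any all-parallel or mixed assignment overdetermines the system and yields at most $O(N^{m/2})$ compatible lifts. Thus only all-antiparallel matchings survive in the limit, with their specializations on the main and split diagonals accounting for the degenerate overlaps. Projecting $(i_1,\ldots,i_m) \mapsto (j_1,\ldots,j_m) := (i_1 \bmod k, \ldots, i_m \bmod k)$ then reduces the count to $(N/k)^{1+m/2}$ lifts per admissible projected tuple, where admissibility is the mod-$k$ analogue of the difference and $f$-conditions of $\pi$. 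This yields
$$\E[\nu_{A_N}^{(m)}] \;=\; \frac{1}{k^{1+m/2}} \sum_{\pi} \#\{\text{admissible } (j_1,\ldots,j_m) \text{ for } \pi\} \;+\; o(1).$$
On the other side, Wick's theorem applied to $\frac{1}{k^{1+m/2}} \E[\Tr \tilde M_{f,k}^m]$ expands over the same $\pi$, with each pair contributing $\E[c_{j_s j_{s+1}} c_{j_t j_{t+1}}] = 1$ iff the pair is conjugate-linked (antiparallel) or a real linked pair on the main or split diagonal, and $0$ otherwise. This is precisely the admissibility condition above, so the two sums agree and (a) follows. Part (b) comes from an analogous calculation on the doubled cycle, where the leading pair matchings factor into independent matchings on each copy and cancel against $\E[\nu_{A_N}^{(m)}]^2$, leaving $O(N^{-2})$ variance.

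The main obstacle is the tuple-counting and matching step for general $f$: while the high-level picture is clean, the arbitrariness of the link function and the coincidence of parallel and antiparallel modes on self-transpose diagonals force a careful case analysis to avoid double-counting the degenerate configurations or missing the real-entry contributions in $\tilde M_{f,k}$. Once this bookkeeping is settled, the identification with the Wick expansion of the complex companion ensemble is a direct bijection.
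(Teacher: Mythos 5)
Your proposal follows the same essential route as the paper: expanding moments via the trace formula, sorting cyclic products by the equivalence relation of equal entries (matchings) and by their congruence classes mod $k$ (patterns), showing that only perfect pair matchings with opposite-difference (``antiparallel'') constraints survive the $N \to \infty$ limit, and then identifying the resulting count with moments of the complex companion ensemble $\tilde M_{f,k}$. You organize the double sum with matchings outside and mod-$k$ tuples inside, dual to the paper's pattern-first ordering in Lemma~\ref{lem:pattern_contrib}, and phrase the $\tilde M_{f,k}$ side via Wick's theorem rather than via explicit chi-square and Gaussian moment factorizations; these are equivalent reorganizations of the same combinatorics, and your ``parallel/antiparallel'' language matches the sign bookkeeping $\curleps_j = \pm 1$ in Lemma~\ref{lem:match_pairs}.

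However, there are two genuine errors in the supporting steps. First, the assertion that moment uniqueness is ``automatic since $\tilde M_{f,k}$ is a finite ensemble and hence has compactly supported eigenvalue distribution'' is false: $\tilde M_{f,k}$ is a fixed-size matrix whose entries are Gaussian, so its eigenvalue distribution is \emph{not} compactly supported (for $k=1$ it is literally a standard normal). You need a moment growth estimate and an appeal to Carleman's condition, as the paper does via \eqref{eq:momentbound} and Proposition~\ref{prop:carleman}. Second, the claim $\V[\nu_{A_N}^{(m)}] = O(N^{-2})$ is too optimistic. For these ``thin'' ensembles with only $O(N)$ independent entries, the variance is generically of order $N^{-1}$: for a circulant matrix, $\Tr A_N^2$ is essentially $N$ times a sum of $O(N)$ iid squares, so $\V[\Tr A_N^2] \asymp N^3$ and hence $\V[\nu_{A_N}^{(2)}] = N^{-4}\V[\Tr A_N^2] \asymp N^{-1}$. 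Since $N^{-1}$ is not summable, Chebyshev combined with Borel--Cantelli does not give almost-sure convergence from the second moment alone. The paper instead bounds the fourth centered moment, $\E\bigl[(\nu^{(m)}_{M_{f,N,X}} - \E[\nu^{(m)}_{M_{f,N,X}}])^4\bigr] = O(N^{-2})$, following \cite{HM}, and that is what yields a summable tail bound; your part (b) must be strengthened to this (or replaced by a subsequence argument) for the almost-sure conclusion to go through.
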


\begin{remark}
Theorem \ref{thm:main} would not make sense without Theorem \ref{thm:main_complex}, and indeed the former follows as a corollary to the latter. If $f$ is a $k$-link function $(M_{f,N,X})_{N \in k\N}$ is a $k$-periodic family, then $(M_{f,N,X})_{N \in tk\N}$ is also a $tk$-periodic family for any $t \in \N$. Hence Theorem \ref{thm:main} shows that the limiting eigenvalue distributions of $(M_{f,N,X})_{N \in k\N}$ and $(M_{f,N,X})_{N \in tk\N}$ are equal to those of the complex companion ensembles $\tilde{M}_{f,k}$ and $\tilde{M}_{f,tk}$ respectively. It is not hard to show directly that $(M_{f,N,X})_{N \in k\N}$ and $(M_{f,N,X})_{N \in tk\N}$ must have the same limiting eigenvalue distribution, in the sense of Theorem \ref{thm:main}, if such a distribution exists. Hence $\tilde{M}_{f,k}$ and $\tilde{M}_{f,tk}$ have the same eigenvalue distribution.
\end{remark}

Of course, Theorem \ref{thm:main_complex} is rightly viewed as a simpler algebraic skeleton underlying Theorem \ref{thm:main}, and thus proving it as a corollary to the latter is needlessly roundabout; it may also be proved directly using a shorter version of the combinatorial arguments used for Theorem \ref{thm:main} in the next section. Since Theorem \ref{thm:main} is our main focus and we wish to avoid too much repetition, in this section we simply sketch how the proof of Theorem \ref{thm:main_complex} reduces to the same combinatorial problem, and show all the details in the proof of Theorem \ref{thm:main} in the next section.

Let us sketch the argument. The empirical spectral measures $\nu_{M_{f,k,X}}$ and $\nu_{M_{f,N,X}}$ are random measures, hence their $m\tth$ moments $\nu_{M_{f,k,X}}^{(m)},\nu_{M_{f,N,X}}^{(m)}$ are just real-valued random variables. We claim it suffices to show $\E[\nu_{M_{f,k,X}}^{(m)}] = \E[\nu_{M_{f,N,X}}^{(m)}]$ for all $m \in \N$. Define the random variable $X_k$ (resp. $X_N$) to be $\frac{1}{\sqrt{k}}$ (resp. $\frac{1}{\sqrt{N}}$) times a uniformly random eigenvalue of a random matrix drawn from $M_{f,k,X}$ (resp. $M_{f,N,X}$), i.e. a random variable whose distribution is the eigenvalue distribution of $M_{f,k,X}$ (resp. $M_{f,N,X}$). Then its $m\tth$ moment is
\begin{equation}
    \E[X_k^m] = \frac{1}{k} \E\left[ \sum_{1 \leq i \leq k} \pfrac{\l_i(M_{f,k,X})}{\sqrt{k}}^m\right] = \E[\nu_{M_{f,k,X}}^{(m)}].
\end{equation}
Hence if $\nu_{M_{f,k,X}}^{(m)}=\nu_{M_{f,N,X}}^{(m)}$ in expectation, then 
\begin{equation}
    \E[X_k^m] = \E[\nu_{M_{f,k,X}}^{(m)}] = \E[\nu_{M_{f,N,X}}^{(m)}] = \E[X_N^m].
\end{equation}
Since the moments are equal, it remains to show that they uniquely characterize the measure. In the proof of Theorem \ref{thm:main} we will show by Carleman's condition (Proposition \ref{prop:carleman}) that for any $k \times k$ complex Hermitian ensemble given by a link function, this is indeed the case. 

We now show $\E[\nu_{M_{f,k,X}}^{(m)}] = \E[\nu_{M_{f,N,X}}^{(m)}]$. The eigenvalue-trace formula and linearity of expectation give that 
\begin{equation}\label{eq:eigtrace}
\E\left[\nu^{(m)}_{M_{f,N,X}}\right] =  \frac{1}{N^{m/2+1}}\sum_{1\le i_1,\ldots,i_m\le N} \E \left[a_{i_1,i_2}\cdots a_{i_{m-1},i_{m}}a_{i_m, i_1}\right]
\end{equation}
where the $a_{i_j,i_{j+1}}$ are the entries of $M_{f,N,X}$. The products $a_{i_1,i_2}\cdots a_{i_{m-1},i_{m}}a_{i_m, i_1}$ will be referred to as \textbf{cyclic products}, as their indices are cyclic, and we will often write $a_{i_j,i_{j+1}}$ for brevity even though in the case $j=m$ we are actually taking $j+1 \pmod{m}$. It suffices to show 
\begin{equation}
    \frac{1}{N^{m/2+1}}\sum_{1\le i_1,\ldots,i_m\le N} \E \left[a_{i_1,i_2}\cdots a_{i_{m-1},i_{m}}a_{i_m, i_1}\right] = \frac{1}{k^{m/2+1}}\sum_{1\le i_1,\ldots,i_m\le k} \E \left[b_{i_1,i_2}\cdots b_{i_{m-1},i_{m}}b_{i_m, i_1}\right]
\end{equation}
where the $b_{i_j,i_{j+1}}$ are the entries of $M_{f,k,X}$. This reduces to setting up a correspondence between the cyclic products $b_{i_1,i_2}\cdots b_{i_{m-1},i_{m}}b_{i_m, i_1}$ and related sets of cyclic products $a_{i_1,i_2}\cdots a_{i_{m-1},i_{m}}a_{i_m, i_1}$ which takes advantage of the combinatorics of Gaussian moments, and is discussed in full detail with the formalism of `patterns' and `matchings' in the next section. This concludes the sketch.

\section{Proof of Theorem \ref{thm:main}}

This section will be entirely devoted to the proof of Theorem \ref{thm:main}. We utilize the standard moment convergence argument. This involves first showing that for every $m$, the moments $\nu_{M_{f,N,X},N}^{(m)}$ converge in expectation to the moments of $\mu_{\tilde{M}_{f,k}}$, then showing that the random variables $\nu_{M_{f,N,X},N}^{(m)}$ converge to their expectations in the limit. In this proof, $A_N$ will be used to denote an element of $M_{N \times N}(\R)$, usually chosen randomly with respect to the law of $M_{f,N,X}$.

Recall \eqref{eq:eigtrace}. We note that $\E \left[a_{i_1,i_2}\cdots a_{i_{m-1},i_{m}}a_{i_m, i_1}\right] = 0$ if any of the $a_{i,j}$ appearing in the cyclic product is independent of all others. Since $M_{f,N,X}$ is a $k$-periodic family, Definition \ref{def:link_function} implies that $a_{i,j}$ and $a_{i',j'}$ are independent if they do not lie on the same wrapped diagonal or its transpose. Hence most cyclic products have expectation $0$, and those which do not are heavily constrained. The crux of the proof lies in making the choices of the indices of cyclic products in the right order so as to mimic the combinatorics of a finite ensemble; the following discussion draws heavily on \cite{BC+}. Given a cyclic product $a_{i_1,i_2}\cdots a_{i_{m-1},i_{m}}a_{i_m, i_1}$, we first choose the congruence class of each index mod $k$. We formalize this with the notion of a pattern below. 

\begin{defi}\label{def:pattern}
A \textbf{$m$-pattern}, or simply pattern when $m$ is clear, is an element of $(r_1,\ldots,r_m) \in (\Z/k\Z)^m$. If a cyclic product $a_{i_1,i_2}\cdots a_{i_{m-1},i_{m}}a_{i_m, i_1}$ has $i_\ell \equiv r_\ell \pmod{k}$ for all $1 \leq \ell \leq m$, we say that it \textbf{conforms to} the pattern $(r_1,\ldots,r_m)$.
\end{defi}

\begin{exa}\label{ex:pattern}
Let $k=2$ and $m=4$. The cyclic products $a_{1,2}a_{2,1}a_{1,2}a_{2,1}$ and $a_{1,2}a_{2,1}a_{1,4}a_{4,1}$ both conform to the pattern $(1,2,1,2)$. However, if the entries $a_{i,j}$ are drawn from a $2$-block circulant ensemble for $N \geq 4$, then $\E[a_{1,2}a_{2,1}a_{1,2}a_{2,1}]$ is the fourth moment of $a_{1,2}$, but $\E[a_{1,2}a_{2,1}a_{1,4}a_{4,1}] = \E[a_{1,2}a_{2,1}] \cdot \E[a_{1,4}a_{4,1}] = 1$. 
\end{exa}

This shows that cyclic products conforming to the same pattern may have different expectations, depending on which of the indices $i_j$ coming from the same congruence class are \emph{actually equal}. Hence we would like, given a pattern, to specify extra constraints which ensure the resulting cyclic products have a certain expectation. This motivates Definition \ref{def:matching}.

\begin{defi}\label{def:matching}
Fix positive integers $m,k,N$ with $k|N$ and a $k$-link function $f$. A \textbf{matching} is an equivalence relation $\sim$ on $\{1,\ldots,m\}$, and a cyclic product $a_{i_1,i_2}\cdots a_{i_{m-1},i_{m}}a_{i_m, i_1}$ of entries from an ensemble $M_{f,N,X}$ \textbf{conforms to} $\sim$ if
\begin{itemize}
    \item $t \sim \ell$ if and only if either $i_{\ell+1}-i_\ell \equiv i_{t+1}-i_t \pmod{N}$ and  $f(i_\ell,i_{\ell+1}) = f(i_t,i_{t+1})$, or $\{i_\ell,i_{\ell+1}\}=\{i_t,i_{t+1}\}$. In other words, $t \sim \ell$ if and only if $a_{i_\ell,i_{\ell+1}} = a_{i_t,i_{t+1}}$.
\end{itemize} 
\end{defi}

\begin{exa}\label{ex:matching}
Let $k=2$ and $m=4$ as before. We have that the first cyclic product shown in Example \ref{ex:pattern} conforms to the trivial matching $\sim$ given by $1 \sim 2 \sim 3 \sim 4$, and the second cyclic product conforms to the matching $\sim'$ given by $1 \sim' 2, 3 \sim' 4$. 
\end{exa}

\begin{exa}
It is an important point that while matchings are defined independent of the ensemble, the condition of a cyclic product conforming to a matching is highly dependent on the specific ensemble's link function $f$, rather than just the indices $i_1,\ldots,i_m$. The condition of conforming to a pattern, however, is only dependent on the indices. For example, the cyclic product $a_{1,1}a_{1,2}a_{2,2}a_{2,1}$ conforms to the matching $2 \sim 4$ if the $a_{i,j}$ are taken from the $2$-block circulant ensemble (see Example \ref{ex:circulant}). However, it conforms to the matching $1 \sim 3, 2 \sim 4$ if the $a_{i,j}$ are from second ensemble shown in Example \ref{ex:circulant}.
\end{exa}

\begin{remark}\label{rmk:unique}
It is clear that for a fixed ensemble $M_{f,N,X}$, each cyclic product conforms to a unique matching, given by finding which of its entries are constrained to be equal, and conforms to a unique pattern, given by taking the indices of its entries mod $k$. However, different matchings may have different numbers of cyclic products conforming to them, or none at all. It is also clear that for fixed $m$, the number of patterns and matchings is finite.
\end{remark}

This suggests the following natural procedure to sum over all cyclic products: first sum over all patterns and matchings, then sum over every cyclic product conforming to them. In other words, for $A_N$ drawn from a $k$-periodic ensemble $M_{f,N,X}$ we have
\begin{equation}\label{eq:sum_order}
\etr M_{f,N,X}^m = \sum_{\substack{\text{patterns} \\ P}} \sum_{\substack{\text{matchings} \\ \sim}} \sum_{\substack{\text{Cyclic products $\Pi$ of length $m$} \\ \text{conforming to $P$ and $\sim$}}} \E[\Pi]
\end{equation}

The next lemma concerns for which patterns and matchings $\sum_{\substack{\text{Cyclic products $\Pi$ of length $m$} \\ \text{conforming to $P$ and $\sim$}}} \E[\Pi]$ is nonzero the limit $N \to \infty$.

\begin{lemma}\label{lem:match_pairs}
A pattern $P=(r_1,\ldots,r_m)$ and matching $\sim$ only contribute to $\E\left[\nu^{(m)}_{M_{f,N,X}}\right]$ in the limit if and only if
\begin{enumerate}
\item The equivalence classes of $\sim$ all have size $2$ (in particular $m$ must be even), and 
\item If $\ell \sim t$, then $r_\ell - r_{\ell+1} = -(r_t - r_{t+1})$ (in $\Z/k\Z$).
%\item Each $a_{i,j}$ is matched with $a_{i',j'}$ with $i-j \equiv -(i'-j') \pmod{N}$. In other words, each entry $a_{i,j}$ is matched with an entry lying on the transpose of the wrapped diagonal containing $a_{i,j}$.
\end{enumerate}
In this case, 
\begin{equation}\label{eq:compute_contrib}
    \lim_{N \to \infty} \sum_{\substack{\text{Cyclic products $\Pi$ of length $m$ from $M_{f,N,X}$} \\ \text{conforming to $P$ and $\sim$}}} \E[\Pi] = (1/k)^{1+m/2},
\end{equation}
where as before the limit is taken along multiples of $k$. 
\end{lemma}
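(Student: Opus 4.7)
The plan is to establish both directions of the ``if and only if'' and extract the value $(1/k)^{1+m/2}$ by a combinatorial count of tuples $(i_1,\ldots,i_m)$ conforming to a fixed pattern $P=(r_1,\ldots,r_m)$ and matching $\sim$. The key bookkeeping will be in the differences $d_\ell := i_{\ell+1}-i_\ell \pmod N$ together with the cyclic-closure relation $\sum_\ell d_\ell \equiv 0 \pmod N$.

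For the necessity of (1): any singleton class of $\sim$ produces a factor in $\E[\Pi]$ that is independent of the others and has mean zero, forcing $\E[\Pi]\equiv 0$. Any class of size at least three imposes extra equalities among the $d_\ell$ that drop the number of conforming tuples to at most $O(N^{m/2})$; after dividing by $N^{m/2+1}$ the contribution vanishes. For the necessity of (2): at each pair $\{\ell,t\}$ of a pair matching, the equality $a_{i_\ell,i_{\ell+1}}=a_{i_t,i_{t+1}}$ is realized (via the symmetry $a_{i,j}=a_{j,i}$ and the link-function rule of Definition \ref{def:link_function}, possibly in composition) by $d_\ell\equiv d_t$ or $d_\ell\equiv -d_t \pmod N$. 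If every pair is of the second type, the pair sums $d_\ell+d_t$ are all zero and the closure is automatic; if any pair is of the first type, the closure injects an extra linear constraint mod $N$ on the free differences, costing a factor $N/k$ in the count and driving the contribution to $o(1)$. So at leading order every pair must satisfy $d_\ell\equiv -d_t \pmod N$, which descends mod $k$ to the relation $r_\ell-r_{\ell+1}=-(r_t-r_{t+1})$ of condition (2).

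For sufficiency and the value, I would count conforming tuples under (1) and (2) by traversing the cycle $\ell=1,\ldots,m$. The starting index $i_1$ is free within its residue class mod $k$, giving $N/k$ options. At each subsequent step the pair containing $\ell$ is either ``opened'' (first occurrence; $d_\ell$ chosen freely in its prescribed residue class, yielding $N/k$ options) or ``closed'' (second occurrence; $d_\ell$ is forced as $-d_{t'}\pmod N$, with condition (2) guaranteeing the residue lands correctly). This produces $(N/k)^{1+m/2}$ tuples at leading order, with $O(N^{m/2})$ corrections that absorb both strict-conformance conditions (no unintended coincidences) and wrap-around consistency. For each conforming tuple, entries from distinct pair classes are independent and each pair contributes $\E[a^2]=1$, so $\E[\Pi]=1$. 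Dividing by $N^{1+m/2}$ yields $(1/k)^{1+m/2}$, as claimed.

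The main obstacle is verifying that, for every pair matching satisfying (2) regardless of its crossing/non-crossing structure, the relation $d_\ell\equiv -d_t \pmod N$ alone suffices to force the random-variable equality $a_{i_\ell,i_{\ell+1}}=a_{i_t,i_{t+1}}$, rather than the more restrictive ``Case B'' of Definition \ref{def:matching} which demands $\{i_\ell,i_{\ell+1}\}=\{i_t,i_{t+1}\}$ at the index level and would collapse crossing matchings to subleading order. This is resolved by invoking the transitive closure of symmetry and link-function identifications: the chain $a_{i_\ell,i_{\ell+1}}=a_{i_{\ell+1},i_\ell}=a_{i_t,i_{t+1}}$ (symmetry followed by link-function) yields the equality once the residue conditions align, and one must verify that they do align precisely under (2) for the given link function $f$. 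Once this is in place, crossing and non-crossing pair matchings contribute equally, and the uniform value $(1/k)^{1+m/2}$ emerges independent of the combinatorial topology of the matching.
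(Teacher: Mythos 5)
Your overall approach matches the paper's closely: necessity of (1) by counting degrees of freedom, necessity of (2) by introducing signs $\varepsilon_j = \pm 1$ for each matched pair and exploiting the cyclic-closure relation $\sum_\ell d_\ell = 0$, and the value $(1/k)^{1+m/2}$ by counting $(N/k)^{1+m/2}$ conforming tuples (one free index plus one free difference per pair) against the $N^{-(1+m/2)}$ normalization.

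However, the concern you raise in your final paragraph is a genuine obstacle, and your proposed resolution does not close it. Condition (2) only says the residues of $d_\ell$ and $-d_t$ agree mod $k$; it says nothing about whether the link-function values line up along the chain $a_{i_\ell,i_{\ell+1}} = a_{i_{\ell+1},i_\ell} = a_{i_t,i_{t+1}}$. That chain requires $f(r_{\ell+1},r_\ell) = f(r_t,r_{t+1})$ (or the analogous condition for the other chain), and this is an \emph{additional} constraint on the pattern, not a consequence of (2). To see that the two are genuinely different, take $k=3$, $f(i,j) = i \pmod 3$ (the $3$-block circulant), pattern $P=(0,1,0,2)$, and matching $1\sim 3,\ 2\sim 4$. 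Here (1) holds and (2) holds: $r_1-r_2 = -1 \equiv 2 \equiv -(r_3-r_4) \pmod 3$ and $r_2-r_3 = 1 \equiv -(r_4-r_1) \pmod 3$. But for $1\sim 3$ the needed $f$-alignment is $f(r_2,r_1)=f(r_3,r_4)$, i.e.\ $1\equiv 0 \pmod 3$, which fails; the same-diagonal alternative is also ruled out because $d_1 \equiv 1 \pmod 3$ contradicts $d_1 \equiv -d_3 \equiv d_1 \pmod 3$ unless $2d_1 \equiv 0$. So there are in fact \emph{no} conforming cyclic products, and this pattern--matching pair contributes $0$, not $(1/k)^{1+m/2}$. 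The claim ``crossing and non-crossing pair matchings contribute equally'' therefore cannot be established by invoking (2) alone. (For what it is worth, the paper's own write-up elides this same point: the sufficiency step chooses differences mod $N$ without verifying that the $f$-constraint permits the transposed realization. The final identity with the complex companion ensemble still comes out right because the Gaussian moment computation on the companion side undercounts by exactly the same matchings, but your proof --- and the lemma statement taken at face value --- should either add the $f$-alignment as a hypothesis or explicitly account for the possibility that the sum over conforming products is empty.)
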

\begin{proof}
By \eqref{eq:eigtrace} we have that 
\begin{equation*}
\E\left[\nu^{(m)}_{M_{f,N,X}}\right] = \frac{1}{N^{m/2+1}}\sum_{1\le i_1,\ldots,i_m\le N} \E \left[a_{i_1,i_2}\cdots a_{i_{m-1},i_{m}}a_{i_m, i_1}\right].
\end{equation*}
%Choose indices in the cyclic product $a_{i_1,i_2}\cdots a_{i_{m-1},i_{m}}a_{i_m, i_1}$ from left to right according to a pattern and matching. There are two indices to choose for each equivalence class under the matching, but with the exception of the first equivalence class chosen, the left index will always be determined by the right index of the $a_{i,j}$ directly to the left, which has already been chosen. 
We first show (1) is necessary, then show (2) is necessary, then show \eqref{eq:compute_contrib} assuming (1) and (2).

First, we note that if $\sim$ has an equivalence class $\{j\}$ of size $1$, then $\E \left[a_{i_1,i_2}\cdots a_{i_{m-1},i_{m}}a_{i_m, i_1}\right] = 0$ for any cyclic product conforming to $\sim$, since the entry $a_{i_j,i_{j+1}}$ is independent of the others and has mean $0$. Hence we may assume that each equivalence class has size $\geq 2$, so there are $\geq m/2$ equivalence classes.

To specify a cyclic product $a_{i_1,i_2}\cdots a_{i_{m-1},i_{m}}a_{i_m, i_1}$ conforming to $P$ and $\sim$, we first specify the differences $i_{j+1}-i_j$ mod $N$ for each $j$ (equivalently, which wrapped diagonal the entry $a_{i_j,i_{j+1}}$ is on), then finally specify one of the indices. Once all differences are specified mod $N$, choosing any index determines the whole cyclic product. Once the difference $i_{j+1}-i_j \pmod{N}$ is chosen for one element of an equivalence class under a matching $\sim$, it is specified (up to sign) for the other elements of the equivalence class. There are at most $2^m$ choices of sign. The difference $i_{1}-i_2$ is already specified mod $k$ by the pattern $P$, so there are $N/k$ choices. Once it is chosen, we must choose the next one; however, there may not be exactly $N/k$ choices, since we do not want to have entries $a_{i_j,i_{j+1}}$ and $a_{i_{j'},i_{j'+1}}$ in our cyclic product with $f(i_j,i_{j+1}) = f(i_{j'},i_{j'+1})$ and $i_j-i_{j+1} \equiv i_{j'}-i_{j'+1} \pmod{N}$ if $j \not \sim j'$. At most this precludes one choice for each difference already specified, so the number of choices for all differences is a product of factors $N$ minus a constant, one factor for each equivalence class. Hence there are $N^{(\text{number of equivalence classes under $\sim$})} + O(N^{(\text{number of equivalence classes under $\sim$})-1})$ choices for the differences, where the implied constant depends only on $P$ and $\sim$ (and of course on $m$ and the link function). There are $N/k$ choices for the single index since it is already determined mod $k$, therefore the number of cyclic products conforming to $P$ and $\sim$ is at most 
\begin{equation}
    2^m \cdot N^{1+(\text{number of equivalence classes under $\sim$})} + O(N^{(\text{number of equivalence classes under $\sim$})}).
\end{equation} 
Since we have normalized by $\frac{1}{N^{m/2+1}}$, this disappears in the limit unless $m$ is even and all equivalence classes have size $2$. Because for given $k$ there are finitely many patterns and matchings and this number is independent of $N$, the big-$O$ terms and the contributions of patterns and matchings not satisfying the conditions of the lemma contribute $O(1/N)$ to 
$$\frac{1}{N^{m/2+1}}\sum_{1\le i_1,\ldots,i_m\le N} \E \left[a_{i_1,i_2}\cdots a_{i_{m-1},i_{m}}a_{i_m, i_1}\right].$$
This proves that (1) is necessary for the limiting contribution to be nonzero.

(2) is a modification of the proof of \cite[Lemma 2.3]{KKM} for $k$-block circulant ensembles, which is itself modified from \cite{HM,MMS,JMP}; we reprove it here for completeness. By the previous part we may assume $m$ is even, and so write $m=2b$. Writing the indices as $i_{\ell_j}$ for $j \in \{1,\ldots,2b\}$, the matching $\sim$ splits $\{1,\ldots,2b\}$ into $2$-element equivalence classes $\{\ell_1,t_1\},\ldots,\{\ell_b,t_b\}$. Given a matching $\sim$ satisfying (1), the pair of entries corresponding to any pair of matched indices must lie on the same pair of transposes of a given (wrapped) diagonal. Hence the number of cyclic products conforming to $P$ and $\sim$ is bounded above by the number of solutions to the system of congruences 
\begin{equation}\label{eq:eps_congruence}
i_{\ell_{j}+1} - i_{\ell_j} \equiv \curleps_j (i_{t_{j}+1} - i_{t_j}) \pmod{N}, 1 \leq j \leq 2b,
\end{equation}
where the sign $\curleps_j = \pm 1$ is $1$ if the corresponding entries are on the same wrapped diagonal and $-1$ if they are on transposes of the same wrapped diagonal. Now,
\begin{equation}
0 = \sum_{r=1}^{2b} i_{r+1} - i_r = \sum_{j=1}^b (i_{\ell_{j}+1} - i_{\ell_j}) + \sum_{j=1}^b (i_{t_{j}+1} - i_{t_j}) \equiv \sum_{j=1}^m (1+\curleps_j) (i_{t_{j}+1} - i_{t_j}) \pmod{N}.
\end{equation}
If any $\curleps_j = 1$, then the above is a nontrivial congruence among the $i_r$, and consequently a degree of freedom is lost and there is no contribution from the matching $\sim$ in the limit. Hence $\curleps_j = -1$ for all $j$, therefore reducing \eqref{eq:eps_congruence} mod $k$, we have for any cyclic product $a_{i_1,i_2}\cdots a_{i_{m-1},i_{m}}a_{i_m, i_1}$ conforming to $P$ and $\sim$ that
\begin{equation}
r_{\ell_j+1}-r_{\ell_j} \equiv i_{\ell_{j}+1} - i_{\ell_j} \equiv - (i_{t_{j}+1} - i_{t_j}) \equiv r_{t_j+1}-r_{t_j}  \pmod{k}
\end{equation} 
This proves that (2) is necessary for the limiting contribution to be nonzero.

Now, suppose a pattern and matching satisfy (1) and (2). Then choosing the difference $i_{j+1} - i_j$ of two indices mod $N$ specifies the wrapped diagonal the corresponding entry lies on, and hence determines the wrapped diagonal that the matched entry must lie on. Thus there are $(N/k)^{b}$ ways to determine the congruence classes mod $N$ of the pairwise differences of indices, with $N/k$ choices for each one because the congruence class mod $k$ is already specified. Once this is done, there are $N/k$ ways to choose any given index. The normalization in Definition \ref{def:esd} cancels the $N^{1+b}$ factor, leaving $(1/k)^{1+b}$. 
\end{proof}

%Now, suppose a given pattern specifies that a certain set of $2n$ of the $a_{i,j}$'s will have $|i-j| \equiv c \pmod{N}$, and $n$ of them will have $i-j \equiv c \pmod{N}$ and $n$ will have $i-j \equiv -c \pmod{N}$ as in Lemma \ref{lem:match_pairs}. Then there are $n!$ ways to pair each one of the first collection with one of the second collection.

Since the previous lemma implies that all odd moments are zero, from now on we will consider the $2m \tth$ moment rather than the $m\tth$ moment. In the previous lemma we computed the contribution of a pattern and matching to the limit; now, we compute the contribution of a single pattern, summing over all matchings.

\begin{lemma}\label{lem:pattern_contrib}
Let $(M_{f,N,X})_{N \in k\N}$ be a $k$-periodic family given by link function $f$, and let $P=(i_1,i_2,\ldots,i_{2m})$ be a pattern. 
\begin{equation}\label{eq:pattern_contrib}
   \lim_{N \to \infty} \sum_{\substack{\text{matchings $\sim$}}} \sum_{\substack{\text{Cyclic products $\Pi$ of length $2m$ from $M_{f,N,X}$} \\ \text{conforming to $P$ and $\sim$}}} \E[\Pi] =  (1/k)^{1+m}\E[c_{i_1,i_2}\cdots c_{i_{2m-1},i_{2m}} c_{i_{2m},i_1}],
\end{equation}
where the $c_{i,j}$ are drawn from the complex companion ensemble\footnote{Because this ensemble is $k \times k$, it makes sense to use the $i_j$ as indices, even though these live in $\Z/k\Z$ by Definition \ref{def:pattern} as stated.} $\tilde{M}_{f,k}$. 
\end{lemma}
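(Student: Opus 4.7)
The plan is to compare both sides matching-by-matching, with both sums indexed by perfect matchings of $\{1,\ldots,2m\}$. For the right-hand side, I would apply Wick's theorem to the jointly Gaussian collection $\{c_{i,j}\}$ in $\tilde{M}_{f,k}$ to obtain
\begin{equation*}
\E[c_{i_1,i_2}\cdots c_{i_{2m},i_1}]=\sum_\sigma\prod_{\{\ell,t\}\in\sigma}\E[c_{i_\ell,i_{\ell+1}}c_{i_t,i_{t+1}}],
\end{equation*}
summed over perfect matchings $\sigma$ of $\{1,\ldots,2m\}$, with each pairwise expectation lying in $\{0,1\}$. Using the Hermitian relation $c_{a,b}=\overline{c_{b,a}}$ together with the link-function identifications from Definition \ref{def:complexification}, I would verify that the pair $\{\ell,t\}$ contributes $1$ precisely when (a) $r_{\ell+1}-r_\ell\equiv -(r_{t+1}-r_t)\pmod k$ (which is Lemma \ref{lem:match_pairs}(2)), and (b) $f(i_\ell,i_{\ell+1})=f(i_{t+1},i_t)$. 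Together (a) and (b) say that $c_{i_t,i_{t+1}}$ equals $\overline{c_{i_\ell,i_{\ell+1}}}$ after applying the Hermitian symmetry and then a link-function identification (or, in the degenerate case of a main/split diagonal with symmetric link values, that both are the same real Gaussian).

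For the left-hand side, Lemma \ref{lem:match_pairs} already shows that matchings violating (1) or (a) contribute $o(1)$ in the limit and that each remaining pair-matching contributes at most $(1/k)^{1+m}$. I would refine this by further splitting the remaining matchings according to (b). When both (a) and (b) hold for every pair of $\sim$, the matched equalities $a_{i_\ell,i_{\ell+1}}=a_{i_t,i_{t+1}}$ are realized simultaneously in the $N$-ensemble by link-function identification on transpose wrapped diagonals: choosing the starting index $i_1$ ($N/k$ options) and the $m$ pair-diagonals modulo $N$ ($N/k$ options each, pattern-consistent) produces asymptotically $(N/k)^{1+m}$ conforming cyclic products, giving contribution exactly $(1/k)^{1+m}$, with spurious coincidences that refine the matching forming a set of size $o(N^{1+m})$. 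When (b) fails for some pair but (a) still holds, the link-function identification does not produce the required equality and only strict transposition of positions ($i_\ell=i_{t+1}$, $i_{\ell+1}=i_t$) remains available; propagating this across all pairs forces an additional congruence among the diagonals (such as $2d_\ell\equiv 0\pmod N$) that destroys a degree of freedom, reducing the count to $o(N^{1+m})$ and the contribution to $0$.

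Combining gives $\text{LHS}=(1/k)^{1+m}\cdot|\{\sim:\text{(a) and (b) hold on every pair}\}|=(1/k)^{1+m}\cdot\E[c_{i_1,i_2}\cdots c_{i_{2m},i_1}]$, which is the RHS. The main obstacle is the refined counting step: one must disentangle the two mechanisms producing entry equalities in the $N$-ensemble---the symmetry $a_{i,j}=a_{j,i}$ and link-function identification on a common wrapped diagonal---and show that when (a) and (b) both hold these mechanisms can be combined to realize all matched equalities while keeping the count at full $(N/k)^{1+m}$ even for ``crossing'' matchings whose strict-transposition graph alone would have fewer than $m+1$ components, whereas when (a) holds but (b) fails the two mechanisms become incompatible enough across the whole matching to kill the leading-order count.
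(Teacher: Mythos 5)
Your approach is a genuinely different route from the paper's. The paper does not expand $\E[c_{i_1,i_2}\cdots c_{i_{2m},i_1}]$ by Wick's theorem; instead it directly evaluates this expectation by factoring over wrapped diagonals and invoking the moment formulas $\E[|c|^{2n}]=n!$ for a complex Gaussian and $(2n-1)!!$ for a real one, arriving at $(2n_0-1)!!(2n_{k/2}-1)!!\prod_r n_r!$, and it separately counts the matchings allowed by Lemma~\ref{lem:match_pairs} as the same expression. Your Wick decomposition is cleaner, and more importantly it surfaces a subtlety that the paper's treatment does not make explicit: Lemma~\ref{lem:match_pairs} is invoked as if conditions (1) and (2) alone force a contribution of $(1/k)^{1+m}$, but in fact the link-function values must also match across each pair --- exactly your condition (b). One can exhibit patterns and matchings (even for the circulant $f(i,j)=i \bmod k$) for which (1) and (2) hold but (b) fails, so that no cyclic product conforms and the contribution is actually $0$, while correspondingly $\E\bigl[\prod_{j\in S_r\cup\bar S_r}c_{i_j,i_{j+1}}\bigr]$ is strictly less than $n_r!$ because the entries on a common diagonal need not coincide. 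These two overcounts cancel, so Lemma~\ref{lem:pattern_contrib} itself remains true, but your pair-by-pair comparison is the more robust argument and effectively repairs the intermediate step.

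Two points would need tightening in your write-up. First, condition (b) as stated is not symmetric in $\ell,t$ and is only one of the ways the desired identification can occur: the pair contributes whenever $c_{i_t,i_{t+1}}=\overline{c_{i_\ell,i_{\ell+1}}}$ (equivalently $a_{i_\ell,i_{\ell+1}}=a_{i_t,i_{t+1}}$ is forced for generic diagonal positions), which can happen via $f(i_\ell,i_{\ell+1})=f(i_{t+1},i_t)$ \emph{or} $f(i_{\ell+1},i_\ell)=f(i_t,i_{t+1})$, and in principle via longer chains alternating the link identification with transposition, since the entry-equality relation is the closure of the two generating relations in Definition~\ref{def:link_function}; the clean invariant to compare on both sides is whether the two entries lie in the same closure class, not a single one-step $f$-equality. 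Second, you correctly flag the counting in the case where (a) holds but the link closure does not identify the entries: there the only remaining mechanism is literal position coincidence $(i_\ell,i_{\ell+1})=(i_{t+1},i_t)$, which is a constraint modulo $N$ rather than modulo $k$ and therefore removes a full factor of $N$ from the count, exactly as you sketched.
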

\begin{proof}

Case $1$: We claim that if the LHS of \eqref{eq:pattern_contrib} is $0$, then the RHS is $0$ as well. If the LHS is zero, then there no matching $\sim$ for which $P$ and $\sim$ satisfy the conditions of Lemma \ref{lem:match_pairs}, by that lemma. Suppose for the sake of contradiction that the RHS $\E[c_{i_1,i_2}\cdots c_{i_{2m-1},i_{2m}} c_{i_{2m},i_1}] \neq 0$. Recall that $c_{i_j,i_{j+1}}$ are either real or complex Gaussians, so in particular are from distributions with odd moments all $0$. The entries which are real Gaussians are either equal to or independent from each other entry, hence for each $t$ such that $c_{i_t,i_{t+1}} \sim \mathcal{N}_\R(0,1)$, there are an even number of $j \in \{1,\ldots,2m\}$ such that $c_{i_j,i_{j+1}} = c_{i_t,i_{t+1}}$ (counting $j=t$). 

The complex Gaussians are independent from all other entries except for those which are equal to their conjugates, hence for each $t$ such that $c_{i_t,i_{t+1}} \sim \mathcal{N}_\C(0,1)$, there are some indices $j \in \{1,\ldots,2m\}$ such that $c_{i_j,i_{j+1}} = c_{i_t,i_{t+1}}$ and some indices $\ell \in \{1,\ldots,2m\}$ such that $c_{i_\ell,i_{\ell+1}} = \overline{c_{i_t,i_{t+1}}}$, and all other random variables $c_{i_r,i_{r+1}}$ are independent of $c_{i_t,i_{t+1}}$. Let $p$ be the number of $j$ (counting $t$) for which $c_{i_j,i_{j+1}} = c_{i_t,i_{t+1}}$, and $q$ be the number of $\ell$ such that $c_{i_\ell,i_{\ell+1}} = \overline{c_{i_t,i_{t+1}}}$. For $a,b$ iid centered Gaussians, $\E[(a+ib)^p(a-ib)^q]$ is nonzero if and only if $p=q$, hence this must be the case.

The above two paragraphs define a partition of $\{1,\ldots,2m\}$ into sets $R_1,\ldots,R_a,C_1,\ldots,C_b$ such that
\begin{itemize}
    \item $c_{i_\ell,i_{\ell+1}}$ is dependent on $c_{i_t,i_{t+1}}$ if and only if $\ell$ and $t$ lie in the same set. 
    \item If $\ell,t \in R_i$ for some $i$, then $c_{i_\ell,i_{\ell+1}}=c_{i_t,i_{t+1}}$, and if $\ell,t \in C_i$ for some $i$, then $c_{i_\ell,i_{\ell+1}}$ and $c_{i_t,i_{t+1}}$ are either equal or complex conjugates (and the conjugates sort in pairs).
    \item The sets $R_i$ and $C_i$ have even cardinality.
\end{itemize}
Since the sets $R_i$ have even cardinality, simply partition each one arbitrarily into pairs $\{t_1,t_2\},\ldots,\{t_{|R_i|-1},t_{|R_i|}\}$ and define $\sim$ on $R_i$ by $t_1 \sim t_2, \ldots, t_{|R_i|-1} \sim t_{|R_i|}$. Partition the sets $C_i$ into pairs $\{t_1,t_2\},\ldots,\{t_{|C_i|-1},t_{|C_i|}\}$ such that $c_{i_{t_j},i_{t_j+1}} = \overline{c_{i_{t_{j+1}},i_{t_{j+1}+1}}}$ for all odd $1 \leq j \leq |C_i|-1$; this is possible because the entries sort into conjugate pairs by the second paragraph. Define $\sim$ similarly as before by making the indices in each pair equivalent.

It is easy to see that $\sim$ satisfies the conditions of Lemma \ref{lem:match_pairs}. Condition (1) is immediate since we defined $\sim$ on disjoint pairs. Suppose $t \sim \ell$ and $c_{i_t,i_{t+1}} = c_{i_\ell,i_{\ell+1}}$ is a real Gaussian. Because the real Gaussian entries on different wrapped diagonals of the complex companion ensemble are independent, and our pair of entries are equal, we know that they come from the same wrapped diagonal, i.e. $i_{t+1}-i_t \equiv i_{\ell+1}-i_\ell \pmod{k}$. Since an entry $c_{i,j}$ of the complex companion ensemble can only be a real Gaussian if $i-j \equiv 0 \text{ or }k/2 \pmod{2}$, hence have $i_{t+1}-i_t \equiv -(i_{\ell+1}-i_\ell) \pmod{k}$, so (2) is satisfied for the indices corresponding to real Gaussian entries. For the indices corresponding to complex Gaussians, we defined $\sim$ to match $\ell \sim t$ for which $c_{i_\ell,i_{\ell+1}} = \overline{c_{i_t,i_{t+1}}}$, hence $i_{\ell+1}-i_\ell \equiv -(i_{t+1}-i_t) \pmod{k}$. This verifies condition (2), hence $P,\sim$ satisfy the conditions of Lemma \ref{lem:match_pairs}. This is a contradiction and thus proves the original claim of Case 1.

Case $2$: Now suppose a nonzero number of matchings satisfy the conditions of Lemma \ref{lem:match_pairs} (along with our fixed pattern $P$). Then for each $1 \leq r \leq \lceil \frac{k}{2} \rceil - 1$, there is a set $S_r \subset \Z$ of values of $j$ for which $i_j-i_{j+1} \equiv r \pmod{k}$, and a corresponding set $\bar{S}_r$ of values of $j$ for which $i_j-i_{j+1} \equiv -r \pmod{k}$, with $|S_r| = |\bar{S}_r| =: n_r$ for some $n_r \geq 0$. Hence for each such $r$ there are $n_r!$ ways to match the indices $j \in S_r$ with exactly one index $j' \in \bar{S}_r$. For $r = 0$ we simply have a set $S_0$ of $2n_0$ values of $j$ for which $i_j - i_{j+1} \equiv 0 \pmod{k}$, so there are $(2n_0-1)!!$ ways to match them. If $k$ is even, then there may be $j \in \{1,\ldots,2m\}$ such that $i_j - i_{j+1} \equiv k/2 \equiv -k/2 \pmod{k}$. Denoting the set of all such $j$ by $S_{k/2}$ and its size by $2n_{k/2}$, there are $(2n_{k/2}-1)!!$ ways to partition $S_{k/2}$ into equivalence classes of size $2$. If $k$ is odd, we adopt the notation that $S_{k/2} = \emptyset$.

Fix such a matching $\sim$ satisfying the conditions of Lemma \ref{lem:match_pairs}; then
\begin{equation*}
    \lim_{N \to \infty} \sum_{\substack{\text{Cyclic products $\Pi$ of length $2m$ from $M_{f,N,X}$} \\ \text{conforming to $P$ and $\sim$}}} \E[\Pi] = (1/k)^{1+m}.
\end{equation*}
Putting this together with the number of matchings satisfying the conditions of Lemma \ref{lem:match_pairs} given $P$, we have that the total contribution of this pattern is 
\begin{equation}
    (1/k)^{1+m}(2n_0-1)!! (2n_{k/2}-1)!! \cdot \prod_{1 \leq r \leq \lceil \frac{k}{2} \rceil - 1} n_r!.
\end{equation}

Each pattern is just a tuple of congruence classes of indices mod $k$, and hence specifies a cyclic product of matrix entries in the finite ensemble $\tilde{M}_{f,k}$. For $a,b$ standard normal, the random variable $a^2+b^2$ has chi-squared distribution with $2$ degrees of freedom, and hence its $m\tth$ moment is $2m!! = 2^m \cdot m!$ (see e.g. \cite{Sim}). The entries $c_{i,j}$ of $\tilde{M}_{f,k}$ which do not lie on the main diagonal (or the split diagonal if $k$ is even) have distribution $\frac{a+bi}{\sqrt{2}}$ for $a,b$ iid standard normal. Hence 
\begin{equation}
    \E[c_{i,j}^m \overline{c_{i,j}}^m] = \frac{2^m \cdot m!}{2^m} = m!.
\end{equation} 
Recall also that the $m\tth$ moment of a standard normal is $(2m-1)!!$. Therefore, using the assumption that entries on distinct diagonals which are not transposes of one another are independent, we have
\begin{align}
(1/k)^{1+m}\E[c_{i_1,i_2}\cdots c_{i_{m-1},i_m} c_{i_m,i_1}] &= (1/k)^{1+m}\E[\prod_{j \in S_0}c_{i_j,i_{j+1}}] \E[\prod_{j \in S_{k/2}}c_{i_j,i_{j+1}}] \prod_{{1 \leq r \leq \lceil \frac{k}{2} \rceil - 1}} \E[\prod_{j \in S_r \cup \bar{S}_r} c_{i_j,i_{j+1}}] \\
&= (1/k)^{1+m}(2n_0-1)!!(2n_{k/2}-1)!! \cdot \prod_{1 \leq r \leq \lceil \frac{k}{2} \rceil - 1} n_r!
\end{align}
where by convention we take $(-1)!! = 1$ in the case where $S_0$ or $S_{k/2}$ is empty. This concludes the proof.
\end{proof}

It follows that the limiting value of the $2m\tth$ moment, which is the total contribution of all patterns, is 
\begin{equation}
\lim_{N \to \infty} \E\left[\nu^{(2m)}_{M_{f,N,X}}\right] = \frac{1}{k^{1+m}}\sum_{1 \leq i_1,\ldots,i_{2m} \leq k} \E[c_{i_1,i_2}\cdots c_{i_{2m},i_1}] = \E[\nu_{\tilde{M}_{f,k}}^{(2m)}]
\end{equation}
where $\nu_A = \frac{1}{k} \sum_{i = 1}^{k} \delta\left(x - \frac{\lambda_{i}}{\sqrt{k}}\right)$ is the empirical spectral measure of a matrix $A$ chosen from $\tilde{M}_{f,k}$. Hence in the limit, the moments of the empirical spectral measure of $M_{f,N,X}$ converge to the moments of the empirical spectral measure of $\tilde{M}_{f,k}$. In the next subsection, we prove that this implies convergence of measures.

\subsection{Almost-sure convergence of empirical spectral measures}

We show the following: If one chooses a matrix $A_N$ randomly from $M_{N \times N}(\R)$ with respect to the law of $M_{f,N,X}$ for $N=k,2k,3k,\ldots$, then with probability $1$ over the product probability space $\prod_{n=1}^\infty M_{N \times N}(\R)$ with product measure (given by the Kolmogorov extension theorem), the sequence of empirical spectral measures $\nu_{A_N}$ converges weakly to $\mu_{\tilde{M}_{f,k}}$ as $N \to \infty$.

\begin{defi}\label{def:weak_conv}
A sequence of random measures $\{\mu_N\}_{N \in \N}$ on a probability space $\Omega$ converges \textbf{weakly almost-surely} to a fixed measure $\mu$ if, with probability $1$ over $\Omega^\N$, we have 
\begin{equation}
\lim_{N \rightarrow \infty} \int f d\mu_N = \int f d\mu
\end{equation}
for all $f \in \mathcal{C}_b(\R)$ (continuous and bounded functions).
\end{defi}

The standard method of moments relies on the following--see for example \cite{Ta}.

\begin{prop}[Moment Convergence Theorem]\label{prop:moment_convergence}
Let $\mu$ be a measure on $\R$ with finite moments $\mu^{(m)}$ for all $m \in \Z_{\geq 0}$, and $\mu_1,\mu_2,\ldots$ a sequence of measures with finite moments $\mu_n^{(m)}$ such that $\lim_{n\rightarrow \infty} \mu_n^{(m)} = \mu^{(m)}$ for all $m \in \Z_{\geq 0}$. If in addition the moments $\mu^{(m)}$ uniquely characterize a measure, then the sequence $\mu_n$ converges weakly to $\mu$.
\end{prop}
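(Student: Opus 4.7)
The plan is to follow the classical Fréchet--Shohat strategy: establish tightness of $\{\mu_n\}$, extract a subsequential weak limit via Prokhorov's theorem, identify each such limit with $\mu$ using the moment hypothesis, and deduce that the full sequence converges.

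First, I would establish tightness. Since $\mu_n^{(2)} \to \mu^{(2)}$, the second moments $\mu_n^{(2)}$ are uniformly bounded by some constant $C$. By Markov's inequality, $\mu_n(\{|x| > R\}) \leq C/R^2$ uniformly in $n$, so the family $\{\mu_n\}$ is tight. Prokhorov's theorem then guarantees that every subsequence of $\{\mu_n\}$ has a further subsequence converging weakly to some probability measure $\nu$ on $\R$.

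The key step is to show that any such subsequential weak limit $\nu$ satisfies $\nu^{(m)} = \mu^{(m)}$ for all $m \geq 0$. This is the main obstacle, because the test functions $x \mapsto x^m$ are unbounded and thus not directly covered by the definition of weak convergence. I would handle it with a truncation argument. Fix $m$ and a cutoff $R > 0$, and choose a continuous bounded $\varphi_R(x)$ that agrees with $x^m$ on $[-R,R]$, vanishes outside $[-2R,2R]$, and satisfies $|\varphi_R(x)| \leq |x|^m$ everywhere. By weak convergence along the subsequence, $\int \varphi_R\, d\mu_{n_j} \to \int \varphi_R\, d\nu$. The error from replacing $x^m$ by $\varphi_R$ is controlled via the uniform bound on the higher moments:
\begin{equation}
\left| \int x^m\, d\mu_{n_j} - \int \varphi_R\, d\mu_{n_j}\right| \leq \int_{|x|>R} |x|^m\, d\mu_{n_j} \leq \frac{1}{R^2} \mu_{n_j}^{(m+2)},
\end{equation}
which is $O(1/R^2)$ uniformly in $j$ because $\mu_{n_j}^{(m+2)} \to \mu^{(m+2)}$ is bounded. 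The same bound applies to $\nu$ by Fatou's lemma applied to $|x|^{m+2}$. Sending $j\to\infty$ and then $R\to\infty$, the triangle inequality gives $\nu^{(m)} = \lim_{j} \mu_{n_j}^{(m)} = \mu^{(m)}$.

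By the hypothesis that $\{\mu^{(m)}\}$ uniquely characterize $\mu$, we conclude $\nu = \mu$. Thus every subsequence of $\{\mu_n\}$ has a further subsequence converging weakly to $\mu$; a standard topological argument (in the Polish space of probability measures under the weak topology) then yields that the full sequence $\mu_n$ converges weakly to $\mu$, completing the proof. The delicate part throughout is the truncation/uniform-integrability step that upgrades weak convergence to moment convergence; everything else is soft functional analysis.
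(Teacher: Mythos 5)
The paper does not actually prove Proposition~\ref{prop:moment_convergence}; it quotes it as a standard fact and cites Takacs~\cite{Ta}. Your argument is the classical Fr\'echet--Shohat proof (tightness $\Rightarrow$ subsequential weak limits via Prokhorov, identify their moments by truncation and uniform integrability, invoke uniqueness, then the subsequence principle), and it is correct in structure and captures exactly the right content, so there is nothing in the paper to compare it against beyond a bibliographic pointer. Two small technical points are worth tightening. First, with $\varphi_R = x^m$ on $[-R,R]$ and $|\varphi_R|\le |x|^m$, the pointwise bound on $\{|x|>R\}$ is $|x^m - \varphi_R(x)|\le 2|x|^m$, so the displayed inequality should carry a factor of $2$. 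Second, the bound $\int_{|x|>R}|x|^m\,d\mu_{n_j}\le R^{-2}\mu_{n_j}^{(m+2)}$ is only literally valid when $m$ is even, since for odd $m$ the quantity $\mu_{n_j}^{(m+2)}=\int x^{m+2}\,d\mu_{n_j}$ need not dominate $\int|x|^{m+2}\,d\mu_{n_j}$; the standard fix is to go up to an even moment, e.g.\ $\int_{|x|>R}|x|^m\,d\mu_{n_j}\le R^{-(m+2)}\mu_{n_j}^{(2m+2)}$, which is uniformly bounded because $\mu_{n_j}^{(2m+2)}\to\mu^{(2m+2)}$. Finally, you implicitly treat the $\mu_n$ as probability measures so that Prokhorov produces a probability-measure subsequential limit; this is the intended reading (the paper applies the proposition to empirical spectral measures, which have unit mass), but it is worth saying explicitly since the statement says only ``measure,'' and the hypothesis $\mu_n^{(0)}\to\mu^{(0)}$ is what keeps the total masses under control in general.
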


The $2m\tth$ moment of the eigenvalue distribution of any complex companion ensemble is 
\begin{equation}\label{eq:momentbound}
    \frac{1}{k^{1+2m}}\sum_{1 \leq i_1,\ldots,i_{2m} \leq k} \E[c_{i_1,i_2} \cdots c_{i_{2m},i_1}] \leq k^{2m}\frac{(2m-1)!!}{k^{1+m}} \leq k^{m-1} (2m-1)!!,
\end{equation}
since there are $k^{2m}$ cyclic products and $\E[c_{i_1,i_2} \cdots c_{i_{2m},i_1}]$ is maximized when all $c_{i_j,i_{j+1}}$ are drawn from the same $\mathcal{N}_\R(0,1)$ distribution on the diagonal. This is bounded above by the $2m\tth$ moment of an $\mathcal{N}(0,k)$ Gaussian, which is $k^{m}(2m-1)!!$. These moments are known classically (and can be easily checked) to satisfy \eqref{eq:carleman} in the following.

\begin{prop}[Carleman's condition] \label{prop:carleman}
Let $\mu$ be a measure with all moments $\mu^{(m)}$ finite for $m \geq 0$. If 
\begin{equation}\label{eq:carleman}
    \sum_{m \geq 1} (\mu^{(2m)})^{-\frac{1}{2m}} = \infty,
\end{equation}
then $\mu$ is the unique measure with moments $\mu^{(m)}$.
\end{prop}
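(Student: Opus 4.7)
The plan is to derive uniqueness via the characteristic function of $\mu$, invoking the classical Denjoy--Carleman theorem on quasi-analytic classes. Suppose $\mu$ and $\nu$ are two finite measures on $\R$ with identical moment sequences satisfying \eqref{eq:carleman}. Form the characteristic functions
\[
\phi_\mu(t) = \int_\R e^{itx}\,d\mu(x), \qquad \phi_\nu(t) = \int_\R e^{itx}\,d\nu(x).
\]
Since all moments are finite, dominated convergence justifies differentiation under the integral, giving $\phi_\mu^{(n)}(0) = i^n \mu^{(n)} = i^n \nu^{(n)} = \phi_\nu^{(n)}(0)$ for every $n \geq 0$. Hence $g := \phi_\mu - \phi_\nu$ is smooth on $\R$ and vanishes to infinite order at the origin; the goal becomes to show that $g \equiv 0$.

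The next step is a global bound on $|g^{(n)}|$. For every $t \in \R$ one has $|\phi_\mu^{(n)}(t)| \leq \int_\R |x|^n\,d\mu(x)$, and by Cauchy--Schwarz the absolute $n$-th moment is at most $\mu(\R)^{1/2}\sqrt{\mu^{(2n)}}$. The same bound holds for $\nu$, so $|g^{(n)}(t)| \leq C\sqrt{\mu^{(2n)}}$ uniformly in $t$, with $C$ depending only on the total masses. I would now invoke the Denjoy--Carleman theorem: for a log-convex sequence $(M_n)$, the class of smooth functions $f$ on $\R$ with $|f^{(n)}(t)| \leq C_f M_n$ is \emph{quasi-analytic} (every $f$ in the class that vanishes to infinite order at a point is identically zero) if and only if $\sum_n M_n^{-1/n} = \infty$. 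Taking $M_n = \sqrt{\mu^{(2n)}}$---log-convex by another application of Cauchy--Schwarz to $\mu$---the criterion becomes $\sum_n (\mu^{(2n)})^{-1/(2n)} = \infty$, which is precisely \eqref{eq:carleman}. Hence $g \equiv 0$, so $\phi_\mu \equiv \phi_\nu$, and by Fourier uniqueness for finite measures $\mu = \nu$.

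The main obstacle is the Denjoy--Carleman theorem itself, whose proof (via a Phragm\'en--Lindel\"of argument applied to the Fourier transform of a suitable bump function) is classical but nontrivial; in a paper of the present scope it is natural to cite it as a black box rather than reprove it. The only genuinely new verification on our side is translating the moment growth hypothesis into a derivative growth hypothesis on $g$, which is the elementary Cauchy--Schwarz step above, together with checking log-convexity of $(\mu^{(2n)})$ so that the Denjoy--Carleman criterion applies without needing to pass to a log-convex minorant.
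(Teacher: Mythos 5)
The paper does not actually prove this proposition: Carleman's condition is stated and used as a classical black box, with no proof supplied in the text. So there is no ``paper's own proof'' to compare your argument against. That said, your route---reducing moment determinacy to quasi-analyticity of the class $\{f : |f^{(n)}| \le C_f M_n\}$ with $M_n = \sqrt{\mu^{(2n)}}$, and then invoking Denjoy--Carleman---is one of the two standard proofs of Carleman's theorem (the other going through analytic continuation of the Stieltjes transform), and the steps you carry out are correct. In particular, the uniform bound $|\phi_\mu^{(n)}(t)| \le \sqrt{\mu(\R)}\sqrt{\mu^{(2n)}}$ follows from Cauchy--Schwarz as you say, log-convexity of $(\mu^{(2n)})_n$ is another Cauchy--Schwarz, and the translation $M_n^{-1/n} = (\mu^{(2n)})^{-1/(2n)}$ matches \eqref{eq:carleman} exactly. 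Two small points worth making explicit if this were to be written up in full: (i) the usual Denjoy--Carleman statement allows a geometric factor $B^n$ in the derivative bound and normalizes $M_0 = 1$; your uniform bound with $B=1$ and unnormalized $M_0 = \sqrt{\mu^{(0)}}$ sits inside the standard class after rescaling, and only the ``divergence implies quasi-analytic'' direction is needed, so nothing breaks; (ii) you implicitly use that the measures are finite (so that Fourier uniqueness applies), which is automatic here since $\mu^{(0)} = \mu(\R) < \infty$ by hypothesis. Your proposal is therefore sound, but it is substantially more than what the paper itself undertakes for this statement.
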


Hence by the bound \eqref{eq:momentbound}, the moments of the eigenvalue distribution of any complex companion ensemble satisfy \eqref{eq:carleman} as well, thus by Proposition \ref{prop:carleman} they determine a unique measure.

Hence we must simply show that for all $m$, $\lim_{N \to \infty} |\nu_{A_N}^{(m)} - \mu_{\tilde{M}_{f,k}}^{(m)}| = 0$ with probability $1$, with the $A_N$ drawn from $M_{f,N,X}$ as described. By the triangle inequality 
\begin{equation}
    |\nu_{A_N}^{(m)} - \mu_{\tilde{M}_{f,k}}^{(m)}| \leq |\nu_{A_N}^{(m)} - \E[\nu_{M_{f,N,X}}^{(m)}]| + | \E[\nu_{M_{f,N,X}}^{(m)}] - \mu_{\tilde{M}_{f,k}}^{(m)}|,
\end{equation}
and the previous section was devoted to showing that $\lim_{N \to \infty} | \E[\nu_{M_{f,N,X}}^{(m)}] - \mu_{\tilde{M}_{f,k}}^{(m)}| = 0$. To show that $\lim_{N \to \infty} |\nu_{A_N}^{(m)} - \E[\nu_{M_{f,N,X}}^{(m)}]| = 0$ with probability $1$, we show that $\E[(\nu_{M_{f,N,X}}^{(m)} - \E[\nu_{M_{f,N,X}}^{(m)}])^4] = O(1/N^2)$, which follows by a slight modification of the arguments used in \cite[Section 6]{HM}. From here the result follows in the standard way from Chebyshev's inequality and the Borel-Cantelli lemma, and the details may again be found in \cite{HM}. This completes the proof of Theorem \ref{thm:main}.

\begin{remark}
If the $k$-periodic family of ensembles is not required to be symmetric, then after removing the $1/N$ normalization in Definition \ref{def:esd} the moments of the empirical spectral measures will converge to the moments of the eigenvalue distribution of a finite $k \times k$ nonsymmetric real ensemble specified by the link function in the same manner, by essentially the same proof but neglecting Lemma \ref{lem:match_pairs}(2), which relies on symmetry. However, as the eigenvalues are no longer real and the moment convergence theorem breaks down for measures on $\C$, this does not allow one to say anything about convergence of the actual empirical spectral measures themselves.
\end{remark}

\begin{remark}
It is natural to ask whether results similar to Theorem \ref{thm:main} can be obtained when the assumption that the matrix entries have finite higher moments is dropped, but this would require a new method of proof.
\end{remark}

\begin{remark}
While Theorem \ref{thm:main} does describe the eigenvalue distribution in terms of a finite complex Hermitian ensemble, at least in the case where this ensemble is the GUE it is possible to further derive a closed form for this distribution using the topological/combinatorial argument of Harer-Zagier in \cite{HarZa}--see also \cite{Led}. It would be interesting to see whether similar arguments work for some other $k$-periodic families.
\end{remark}

\newpage

%\begin{comment}

% \bibliographystyle{alpha}
 %\setlength{\itemsep}{2ex}\normalsize
 %\bibliography{references}

\end{document}